\newtheorem{thm}{Theorem}[section]
\newtheorem{prop}[thm]{Proposition}
\newtheoremstyle{obs}
  {3pt}
  {3pt}
  {}
  {}
  {\bfseries}
  {.}
  {.5em}
  {}
\theoremstyle{obs}
\newtheorem{remark}[thm]{Remark}
\def\qed{\ifvmode\removelastskip\fi
{\unskip\nobreak\hfil\penalty50\hbox{}\nobreak\hfil \hbox{\vrule
height1.2ex width1.2ex}\parfillskip=0pt \finalhyphendemerits=0
\par \smallskip}}
\begin{document}

\title{An extension to the theory of controlled Lagrangians using the Helmholtz conditions}

\author[Marta Farr\'{e} Puiggal\'{i}, Anthony M. Bloch]{ Marta Farr\'{e} Puiggal\'{i}$^1$, Anthony M. Bloch$^1$}

\address{
	$^1$  Department of Mathematics,
	University of Michigan, 530 Church St. Ann Arbor, 48109, 
	Michigan, USA	
}
\email[Anthony M. Bloch]{abloch@umich.edu}
\email[Marta Farr\'e Puiggal{\'\i}]{mfarrepu@umich.edu}

\maketitle

\begin{abstract}
The Helmholtz conditions are necessary and sufficient conditions for a system of second order differential equations to be variational, that is, equivalent to a system of Euler-Lagrange equations for a regular Lagrangian. On the other hand, matching conditions are sufficient conditions for a class of controlled systems to be variational for a Lagrangian function of a prescribed type, known as the controlled Lagrangian. 
Using the Helmholtz conditions we are able to recover the matching conditions from \cite{BLMfirst}. Furthermore we can derive new matching conditions for a particular class of mechanical systems. It turns out that for this class of systems we obtain feedback controls that only depend on the configuration variables. We test this new strategy for the inverted pendulum on a cart and for the inverted pendulum on an incline.

\vspace{3mm}
	
\textbf{Keywords:} inverse problem, Helmholtz conditions, matching conditions, controlled Lagrangians
	
\vspace{3mm}
	
\textbf{2010 Mathematics Subject Classification:} 49N45, 58E30, 70H03, 70Q05
\end{abstract}

\section{Introduction}

Assume we are given a system of Euler-Lagrange equations with an unstable equilibrium that we want to stabilize using feedback controls applied in certain directions. 
The controlled Lagrangians techniques provide a method for solving this problem for  certain mechanical systems, providing explicit control laws \cite{BLMfirst,BCLMsecond,BKZ2015,ortega,gharesifard}. The method consists in choosing, among a class of feedback controls with parameters, ones that transform the controlled system into a Lagrangian form for a specific type of Lagrangian function that depends on the same parameters as the control. Matching conditions are sufficient conditions, depending on the system and the parameters, which ensure this goal is accomplished. Then energy shaping methods can be used to obtain stability.

Trying to put a system of second order differential equations (SODE) into Lagrangian form is the issue of the inverse problem of the calculus of variations, although no specific form is usually imposed for the new Lagrangian, see \cite{Douglas,2008KP,94CSMBP,STP2002}. If a regular Lagrangian function can be found then the system is called variational. The variationality of a SODE can be  characterized through the Helmholtz conditions, which are a set of necessary and sufficient conditions which depend on a matrix $(g_{ij})$ of functions on $TQ$, but not on the Lagrangian function itself. 

In this paper we will use the Helmholtz conditions in order to recover the matching conditions from \cite{BLMfirst} and obtain additional ones for a specific class of systems. The class of allowed controlled Lagrangians will be slightly extended for this purpose.

In this section we will first review aspects of the method of controlled Lagrangians and the matching conditions. 
Then we will introduce the inverse problem and recall the Helmholtz conditions. Finally we show how both sets of conditions are related.

\subsection{Controlled Lagrangians techniques: matching conditions} \label{matching}

Consider a configuration manifold $Q$ with a Lie group $G$ acting freely and properly on $Q$. Consider also a  Lagrangian function $L$ on $TQ$ of mechanical type, that is, of kinetic minus potential energy type. More precisely we have
$$
L(v_q)=\frac{1}{2}g(v_q,v_q)-V(q), \quad v_q\in T_q Q \, ,
$$
for some Riemannian metric $g$ on $Q$ and some function $V:Q\longrightarrow \mathbb{R}$.  We will assume that the Lagrangian $L$ is invariant under the lifted action of $G$ on $TQ$. In this section we will briefly recall how to construct from $L$ a new Lagrangian function $L_{\tau,\sigma,\rho}$ on $TQ$, known as the controlled Lagrangian. 

Consider the vertical spaces with respect to the projection $\pi:Q\longrightarrow Q/G$, that is, the tangent spaces to the orbits of the $G$-action, and the orthogonal complements with respect to the kinetic energy metric $g$, which will be referred to as horizontal spaces. Then for each $v_q \in T_q Q$ we obtain a unique decomposition
$$
v_q=\mbox{Hor} v_q + \mbox{Ver} v_q \, ,
$$
where $\mbox{Ver} v_q \in T_q Orb(q)$ and $\mbox{Hor} v_q \in T_q Orb(q)^{\bot_g}$.

Let $\mathfrak{g}$ denote the Lie algebra of $G$, $\xi_Q$ the infinitesimal generator corresponding to $\xi\in\mathfrak{g}$ and  let $\tau$ be a $\mathfrak{g}$-valued $G$-equivariant horizontal one-form on $Q$.  
We define the $\tau$-horizontal projection and the $\tau$-vertical projection  respectively as
$$
\mbox{Hor}_\tau: v_q\longmapsto \mbox{Hor} v_q - \tau(v_q)_Q(q) \quad \mbox{ and } \quad  \mbox{Ver}_\tau: v_q \longmapsto \mbox{Ver} v_q + \tau(v_q)_Q(q) \quad \mbox{for} \quad v_q\in T_qQ \, .
$$
The freedom in the controlled Lagrangian $L_{\tau,\sigma,\rho}$ comes from the following choices:
\begin{itemize}
\item A new choice of horizontal space, corresponding to a choice of $\tau$, 
\item A change $g\rightarrow g_\sigma$ of the metric acting on $\tau$-horizontal vectors,
\item A change $g\rightarrow g_\rho$ of the metric acting on vertical vectors.
\end{itemize} 
Once these choices have been made, according to \cite[Definition 1.2]{BLMfirst} the controlled Lagrangian can be defined as
\begin{equation*}
L_{\tau,\sigma,\rho}(v_q)=\frac{1}{2}\left(g_\sigma(\mbox{Hor}_\tau v_q, \mbox{Hor}_\tau v_q) + g_\rho(\mbox{Ver}_\tau v_q, \mbox{Ver}_\tau v_q) \right)-V(q) \, .
\end{equation*}

According to \cite[Theorem 2.1]{BLMfirst}, if $g$ and $g_\sigma$ coincide on the horizontal spaces and further  the horizontal and vertical spaces are $g_\sigma$-orthogonal, then the controlled Lagrangian can be rewritten as
\begin{equation} \label{CL-assumptions}
L_{\tau,\sigma,\rho}(v_q) = L(v_q+\tau(v_q)_Q)+\frac{1}{2}g_\sigma(\tau(v_q)_Q,\tau(v_q)_Q)+\frac{1}{2}\varpi(v_q) \, ,
\end{equation}
where $v_q\in T_qQ$ and $\varpi(v_q)=(g_\rho-g)(\mbox{Ver}_\tau(v_q),\mbox{Ver}_\tau(v_q))$.

Assume now that $G$ is Abelian. 
If we take local coordinates $(x^\alpha,\theta^a)$ on $Q$ such that $x^\alpha$ are coordinates on $Q/G$ and $\theta^a$ are coordinates on $G$, then the given mechanical Lagrangian is written as
\begin{equation*}
L(x^\alpha,\dot{x}^\beta,\dot{\theta}^a)=\frac{1}{2}g_{\alpha\beta}\dot{x}^\alpha\dot{x}^\beta+g_{\alpha a}\dot{x}^\alpha\dot{\theta}^a+\frac{1}{2}g_{ab}\dot{\theta}^a\dot{\theta}^b - V(x^\alpha) \, .
\end{equation*}
In these coordinates, under the same assumptions that provide (\ref{CL-assumptions}), the controlled Lagrangian becomes
\begin{eqnarray*}
L_{\tau,\sigma,\rho}\left(x^\alpha,\dot{x}^\beta,\dot{\theta}^a\right)&=& L\left(x^\alpha,\dot{x}^\beta,\dot{\theta}^a+\tau^a_\alpha \dot{x}^\alpha\right) + \frac{1}{2}\sigma_{ab}\tau^a_\alpha \tau^b_\beta \dot{x}^\alpha \dot{x}^\beta  \\
&& + \frac{1}{2}\varpi_{ab}\left( \dot{\theta}^a + g^{ac}g_{\alpha c}\dot{x}^\alpha +\tau^a_\alpha \dot{x}^\alpha \right) \left( \dot{\theta}^b + g^{bd}g_{\beta d}\dot{x}^\beta +\tau^b_\beta \dot{x}^\beta \right) \, ,
\end{eqnarray*}
where $\sigma_{ab},\varpi_{ab}$ are the coefficients of the last two terms in (\ref{CL-assumptions}) and $\tau^a_\alpha$ are the coefficients of $\tau$.

As in \cite{BLMfirst}  we will refer to the particular choice $g_\rho=g$ as the \textbf{special matching assumption} and write the corresponding controlled Lagrangian as $L_{\tau,\sigma}$.
In this case, according to (\ref{CL-assumptions}), the controlled Lagrangian $L_{\tau,\sigma}$ becomes 
\begin{equation*}
L_{\tau,\sigma}(v_q) = L(v_q+\tau(v_q)_Q)+\frac{1}{2}g_\sigma(\tau(v_q)_Q,\tau(v_q)_Q) \, .
\end{equation*}
Now we will compare the solutions of the system of controlled Euler-Lagrange equations
\begin{eqnarray}
\frac{d}{dt}\frac{\partial L}{\partial \dot{x}^\alpha}-\frac{\partial L}{\partial x^\alpha} = 0 \, , & &
\frac{d}{dt}\frac{\partial L}{\partial \dot{\theta}^a} = u_a \, ,  \label{EL1}
\end{eqnarray}
to the solutions of the systems of Euler-Lagrange equations
\begin{eqnarray}
\frac{d}{dt}\frac{\partial L_{\tau,\sigma}}{\partial \dot{x}^\alpha}-\frac{\partial L_{\tau,\sigma}}{\partial x^\alpha} = 0 \, , & & \frac{d}{dt}\frac{\partial L_{\tau,\sigma}}{\partial \dot{\theta}^a} = 0 \, , \label{CELsim} \\
\frac{d}{dt}\frac{\partial L_{\tau,\sigma,\rho}}{\partial \dot{x}^\alpha}-\frac{\partial L_{\tau,\sigma,\rho}}{\partial x^\alpha} = 0 \, , & & \frac{d}{dt}\frac{\partial L_{\tau,\sigma,\rho}}{\partial \dot{\theta}^a} = 0 \, , \label{CELrho}
\end{eqnarray}
where $u_a$ is chosen in such a way that the $\theta^a$ equation in (\ref{EL1}) coincides with the $\theta^a$ equation in (\ref{CELsim}) or (\ref{CELrho}), depending on whether we are making the special matching assumption or not.

Consider the following two sets of assumptions, known as \textbf{matching conditions} and \textbf{simplified matching conditions} respectively:

\textit{Assumption M1}: $\tau^b_\alpha=-\sigma^{ab}g_{\alpha a}\, ,$

\textit{Assumption M2}: $\sigma^{bd}(\sigma_{ad,\alpha}+g_{ad,\alpha})=2g^{bd}g_{ad,\alpha}\, ,$

\textit{Assumption M3}: $\tau^b_{\alpha,\beta}-\tau^b_{\beta,\alpha}=g^{db}g_{ad,\alpha}\tau^a_\beta\, ,$

\textit{Assumption SM1}: $\sigma_{ab}=\sigma g_{ab}$ for a constant $\sigma\, ,$

\textit{Assumption SM2}: $g_{ab}$ is constant , 

\textit{Assumption SM3}: $\tau^b_{\alpha}=-(1/\sigma)g^{ab}g_{\alpha a}\, ,$

\textit{Assumption SM4}: $g_{\alpha a,\delta}=g_{\delta a,\alpha}\, ,$

\noindent where $,\alpha$ denotes partial derivative with respect to $x^\alpha$.

In \cite[Theorem 2.2]{BLMfirst} it is shown that under the matching conditions M1-M3 the solutions of (\ref{EL1}) and (\ref{CELsim}) coincide. In particular, if the simplified matching conditions SM1-SM4 hold, then  M1-M3 also hold and therefore the solutions of (\ref{EL1}) and (\ref{CELsim}) coincide.

If the simplified matching conditions do not hold, then we may relax the special matching assumption $g_\rho=g$ and consider controlled Lagrangians of the form $L_{\tau,\sigma,\rho}$. In this case we can consider the \textbf{generalized matching conditions}, which provide equivalence of (\ref{EL1}) and (\ref{CELrho}), see \cite[Theorem 1.3]{BKZ2015}:

\textit{Assumption GM1}: $\tau^b_\alpha=-\sigma^{ab}g_{\alpha a}\, ,$

\textit{Assumption GM2}: $\sigma^{bd}(\sigma_{ad,\alpha}+g_{ad,\alpha})=2g^{bd}g_{ad,\alpha}\, ,$

\textit{Assumption GM3}: $\varpi_{ab,\alpha}=0\, ,$

\textit{Assumption GM4}: $\tau^b_{\alpha,\delta}-\tau^b_{\delta,\alpha}+\varpi_{ad}\rho^{bd}(\zeta^a_{\alpha,\delta}-\zeta^a_{\delta,\alpha})-\varpi_{ad}\rho^{dc}g_{ce,\delta}\rho^{eb}\zeta^a_\alpha-\rho^{db}g_{ad,\alpha}\tau^a_\delta=0\, ,$

\noindent where $\zeta^a_\alpha=g^{ac}g_{\alpha c}\, .$

So far we have assumed that the original Lagrangian $L$ is invariant under the action of an Abelian Lie group. If we keep this assumption for the kinetic energy part of the Lagrangian but allow for symmetry breaking in the potential energy part then we can add an extra condition to the simplified matching conditions, namely

\textit{Assumption SM5}: $V_{,\alpha a}g^{ad}g_{\beta d}=V_{,\beta a}g^{ad}g_{\alpha d}\, ,$

\noindent which ensures that, if we choose $g_{\rho}=\rho g_{ab}$ for some constant $\rho$, then (\ref{EL1}) is equivalent to 
\begin{eqnarray}
\frac{d}{dt}\frac{\partial L_{\tau,\sigma,\rho,\epsilon}}{\partial \dot{x}^\alpha}-\frac{\partial L_{\tau,\sigma,\rho,\epsilon}}{\partial x^\alpha} = 0 \, , & & \frac{d}{dt}\frac{\partial L_{\tau,\sigma,\rho,\epsilon}}{\partial \dot{\theta}^a} = 0 \, , \label{ELpot} 
\end{eqnarray}
where 
$$
L_{\tau,\sigma,\rho,\epsilon}=L_{\tau,\sigma}+\frac{1}{2}(\rho-1)g_{ab}(\dot{\theta}^a + g^{ac}g_{\alpha c}\dot{x}^\alpha + \tau^a_\alpha \dot{x}^\alpha)(\dot{\theta}^b + g^{bd}g_{\beta d}\dot{x}^\beta + \tau^b_\beta \dot{x}^\beta)-V_{\epsilon}(x^\alpha,\theta^a) \, ,
$$
and $V_\epsilon$ depends on a real parameter $\epsilon$, see \cite[Theorem III.1]{BCLMsecond}.

\begin{remark}
All of the above mentioned matching conditions are sufficient conditions to ensure equivalence of the systems (\ref{EL1}) and (\ref{CELsim}) or (\ref{CELrho}) or (\ref{ELpot}), but they are not enough to guarantee stability of the desired equilibrium. Conditions for obtaining stability are given in \cite{BLMfirst,BCLMsecond}.
\end{remark}

\subsection{The inverse problem of the calculus of variations: Helmholtz conditions}

Assume we are given an explicit system of second order differential equations
\begin{equation} \label{explicit}
\ddot{q}^i=\Gamma^i(q,\dot{q}) \, , \quad i=1,\ldots,n \, .
\end{equation}
The classical multiplier version of the inverse problem of the calculus of variations poses the following question \cite{Douglas}. Is the system (\ref{explicit}) equivalent to a system  of Euler-Lagrange equations for some regular Lagrangian $L(q,\dot{q})$, that is, to a system of the form
\begin{equation} \label{EL}
\frac{d}{dt}\left( \frac{\partial L}{\partial \dot{q}^i}\right) -\frac{\partial L}{\partial q^i} =0 \, ,
\end{equation}
where the matrix $\left(\frac{\partial^2 L}{\partial \dot{q}^i \partial \dot{q}^j}\right)$ is regular?
More precisely, we can ask for the existence of a regular matrix $(g_{ij}(q,\dot{q}))$, the so-called multiplier matrix, such that
\begin{equation} \label{multiplier-problem}
g_{ij}(\ddot{q}^j-\Gamma^j(q,\dot{q}))=\frac{d}{dt}\left( \frac{\partial L}{\partial \dot{q}^i}\right) -\frac{\partial L}{\partial q^i} \, 
\end{equation}
holds for some regular Lagrangian $L$. Notice that the requirement (\ref{multiplier-problem}) coincides with asking for equivalence of solutions of the two systems (\ref{explicit}) and (\ref{EL}). Indeed, if we put (\ref{EL}) in explicit form we get
$$
\ddot{q}^i=g^{ij}\left( -\frac{\partial^2 L}{\partial \dot{q}^{j} \partial q^k }\dot{q}^{k}+\frac{\partial L }{\partial q^{j}} \right) \, ,
$$
where $(g^{ij})$ is the inverse matrix of $\left(\frac{\partial^2 L}{\partial \dot{q}^i \partial \dot{q}^j}\right)$.
Then equivalence of solutions to (\ref{explicit}) and (\ref{EL}) implies $\Gamma^i(q,\dot{q})=g^{ij}\left(- \frac{\partial^2 L}{\partial \dot{q}^{j} \partial q^k }\dot{q}^{k}+\frac{\partial L }{\partial q^{j}} \right) $, that is, (\ref{multiplier-problem})  is satisfied.

Determining if (\ref{multiplier-problem}) holds for some regular Lagrangian is equivalent to determining the existence of solutions to the Helmholtz conditions given by Douglas in terms of multipliers in \cite{Douglas}:
\begin{eqnarray}
&&\det(g_{ij})\not= 0, \qquad g_{ji}=g_{ij}, \qquad
\frac{\partial g_{ij}}{\partial \dot{q}^k}=\frac{\partial g_{ik}}{\partial \dot{q}^j} \, ,  \label{Helmholtz1} \\
&&\Gamma(g_{ij})-\nabla^k_jg_{ik}-\nabla^k_i g_{kj}=0, \qquad
g_{ik}\Phi^k_j=g_{jk}\Phi^k_i, \label{Helmholtz2} 
\end{eqnarray}
where 
\begin{equation*}
\Gamma=\dot{q}^i\frac{\partial}{\partial q^i}+
\Gamma^i(q, \dot{q})\frac{\partial}{\partial \dot{q}^i}\, , \quad 
\nabla^i_j=-\frac{1}{2}\frac{\partial \Gamma^i}{\partial \dot q^j}\, , \quad
\Phi^k_j=\Gamma\left(\frac{\partial \Gamma^k}{\partial \dot q^j}\right)-2\frac{\partial \Gamma^k}{\partial  q^j}-\frac{1}{2}\frac{\partial \Gamma^i}{\partial \dot q^j}\frac{\partial \Gamma^k}{\partial \dot q^i}\, .
\end{equation*}

There is an earlier version of the inverse problem due to Helmholtz which poses the following question \cite{Helmholtz1887}. Given an implicit system of second order differential equations
$$
\Phi_i(q,\dot{q},\ddot{q})=0 \, , \quad i=1,\ldots,n \, ,
$$
determine whether or not it is  possible to find a regular Lagrangian $L(q,\dot{q})$ such that
\begin{equation} \label{IP:exact}
\Phi_i(q,\dot{q},\ddot{q})=\frac{d}{dt}\left( \frac{\partial L}{\partial \dot{q}^i}\right) -\frac{\partial L}{\partial q^i} \, .
\end{equation} 
Necessary and suffient conditions for (\ref{IP:exact}) to hold are
\begin{eqnarray}
\frac{\partial \Phi_i}{\partial \ddot{q}^j}-\frac{\partial \Phi_j}{\partial \ddot{q}^i} &=&0 \, ,  \label{HC1} \\
\frac{\partial \Phi_i}{\partial q^j}-\frac{\partial \Phi_j}{\partial q^i} -\frac{1}{2}\frac{d}{dt}\left( \frac{\partial \Phi_i}{\partial \dot{q}^j}-\frac{\partial \Phi_j}{\partial \dot{q}^i} \right) &=&0 \, , \label{HC2} \\
\frac{\partial \Phi_i}{\partial \dot{q}^j}+\frac{\partial \Phi_j}{\partial \dot{q}^i}-\frac{d}{dt}\left( \frac{\partial \Phi_i}{\partial \ddot{q}^j}+\frac{\partial \Phi_j}{\partial \ddot{q}^i} \right) &=&0 \, , \label{HC3}
\end{eqnarray}
also known as Helmholtz conditions.
Notice that if the system is given in explicit form
$$
\Phi^i=\delta^{ij}\Phi_{j}=\ddot{q}^i-\Gamma^i(q,\dot{q})=0 \, , \quad i=1,\ldots,n \, ,
$$
then we can also try to solve (\ref{IP:exact}). This is the same as requiring that equations (\ref{Helmholtz1})-(\ref{Helmholtz2}) hold with $g=\mbox{Id}$. By doing this, equations (\ref{HC1})-(\ref{HC3})  are recovered from  (\ref{Helmholtz1})-(\ref{Helmholtz2}).

One third possible question is the  following. Assume again that we are given an implicit system of second order differential equations
$$
\Phi_i(q,\dot{q},\ddot{q})=0  \, , \quad i=1,\ldots,n \, ,
$$
but now we only ask for equivalence of solutions, that is, we don't require (\ref{IP:exact}) or (\ref{multiplier-problem}) a priori, (and we don't necessarily know the expression $\ddot{q}^i=\Gamma^i(q,\dot{q})$). Assume $C=\left( \frac{\partial \Phi}{\partial \ddot{q}} \right)$ is regular.

For this problem we also have a set of Helmholtz conditions, to which we will refer as implicit Helmholtz conditions (IHC):
\begin{eqnarray}
\frac{\partial F_i}{\partial \dot{q}^j}&=&\frac{\partial F_j}{\partial \dot{q}^i}  \label{BB} \, , \\
\frac{\partial^2 F_i}{\partial \dot{q}^j\partial q^k}\dot{q}^k+\frac{\partial F_i}{\partial q^j}+\frac{\partial^2 F_i}{\partial \dot{q}^j \partial \dot{q}^k}\ddot{q}^k-\frac{\partial F_j}{\partial q^i}&=&\frac{\partial F_i}{\partial \dot{q}^k}\frac{\partial \Phi_r}{\partial \dot{q}^j}(C^{-1})^{k r} \label{AB} \, ,  \\
\frac{\partial^2 F_i}{\partial q^j\partial q^k}\dot{q}^k+\frac{\partial^2 F_i}{\partial q^j\partial \dot{q}^k}\ddot{q}^k-\frac{\partial F_i}{\partial \dot{q}^k}\frac{\partial \Phi_r}{\partial q^j}(C^{-1})^{k r}&=&\frac{\partial^2 F_j}{\partial q^i\partial q^k}\dot{q}^k+\frac{\partial^2 F_j}{\partial q^i\partial \dot{q}^k}\ddot{q}^k-\frac{\partial F_j}{\partial \dot{q}^k}\frac{\partial \Phi_r}{\partial q^i}(C^{-1})^{k r}  \label{AA} \, .
\end{eqnarray}
Now the unknowns are $F_j(q,\dot{q})$, $j=1,\ldots,n$, instead of $(g_{ij})$, and correspond to the components of the Legendre transformation for the sought Lagrangian function. These equations are given in \cite{BFFM} and are derived in analogous fashion to the ones given in \cite{BFM2015} using Lagrangian submanifolds.

\subsection{The relationship between matching conditions and Helmholtz conditions} \label{intro-relationship}

All of the matching conditions mentioned in Section \ref{matching} give particular solutions of the Helmholtz conditions (\ref{Helmholtz1})-(\ref{Helmholtz2}) or equivalently (\ref{BB})-(\ref{AA}) if we consider the Legendre transformation corresponding to $L_{\tau,\sigma}$, $L_{\tau,\sigma,\rho}$ or $L_{\tau,\sigma,\rho,\epsilon}$.

For example, under the matching conditions M1-M3 we obtain a controlled SODE for which $L_{\tau,\sigma}$ solves the problem (\ref{multiplier-problem}). Therefore the multipliers $g_{ij}=\frac{\partial^2 L_{\tau,\sigma}}{\partial \dot{q}^i \partial \dot{q}^j}$ must satisfy the Helmholtz conditions (\ref{Helmholtz1})-(\ref{Helmholtz2}), and the components of the Legendre transformation $F_i=\frac{\partial L_{\tau,\sigma}}{\partial \dot{q}^i}$ must satisfy the implicit Helmholz conditions (\ref{BB})-(\ref{AA}). The same holds for $L_{\tau,\sigma,\rho}$ and $L_{\tau,\sigma,\rho,\epsilon}$. 

In this paper we will slightly modify the expression of the controlled Lagrangian. We will consider controlled Lagrangians of the form 
$$
\tilde{L}_{\tau,\sigma}=K_{\tau,\sigma}-\tilde{V}_{\tau,\sigma}(x^\alpha,\theta^a)\, , 
$$
where $K_{\tau,\sigma}$ denotes the kinetic energy part of $L_{\tau,\sigma}$, but the potential energy part $\tilde{V}_{\tau,\sigma}$ does not necessarily coincide with the one in the original Lagrangian.

We will take the corresponding Legendre transformation and impose it as a solution of  the implicit Helmholz conditions (\ref{BB})-(\ref{AA}). By doing so, we should recover the matching conditions M1-M3 as particular solutions, but may find new ones due to the freedom in the potential energy part. Now the unknowns are the free parameters that appear both in the controlled Lagrangian and the controlled SODE.

We will follow this approach in Section \ref{sec:ArbDimSpecial}, where we show explicitly how the matching conditions M1-M3 arise from the Helmholtz conditions (\ref{BB})-(\ref{AA}) if we choose $\tilde{L}_{\tau,\sigma}$ as the new Lagrangian. 
The detailed computations are given in Appendix \ref{appendix}.

In Section \ref{sec-new-tau} we use the implicit Helmholtz conditions (\ref{BB})-(\ref{AA}) for the case of one degree of underactuation and $(g_{ab})$ constant  to obtain an additional matching condition, alternative to SM3. In this case we obtain a feedback control which is independent of velocities.

Finally in Section \ref{sec-matching-examples} we will apply the results of Section \ref{sec-new-tau} to obtain a new stabilizing control for the inverted pendulum on a cart. We will further show how to use the implicit Helmholtz conditions (\ref{BB})-(\ref{AA})  to obtain an additional solution for the inverted pendulum on an incline. 
Stability can be achieved in both cases by appropriate choice of the free parameters.

\section{Arbitrary dimension with special matching assumption} \label{sec:ArbDimSpecial}
In this section we will show how the matching conditions M1-M3 arise from the implicit Helmholtz conditions (\ref{BB})-(\ref{AA}) using the special matching assumption, that is, choosing a controlled Lagrangian with $g_\rho=g$. Since we will use the Legendre transformation of the controlled Lagrangian $L_{\tau,\sigma}$, the potential energy of the new Lagrangian will not play any role in satisfying (\ref{BB})-(\ref{AA}).
 
As a starting point consider a given mechanical Lagrangian of the form
\begin{equation}
L(x^\alpha,\dot{x}^\alpha,\dot{\theta}^a)=\frac{1}{2}\left( g_{\alpha\beta}\dot{x}^\alpha\dot{x}^\beta + 2g_{\alpha a}\dot{x}^\alpha\dot{\theta}^a+g_{ab}\dot{\theta}^a\dot{\theta}^b  \right)-V(x^\alpha) \, ,
\end{equation}
with corresponding Euler-Lagrange equations given by
\begin{eqnarray*}
\Phi_\alpha&=& (g_{\alpha\beta,\gamma}-\frac{1}{2}g_{\gamma\beta,\alpha})\dot{x}^\gamma \dot{x}^\beta + (g_{\alpha a , \gamma} - g_{\gamma a,\alpha})\dot{x}^\gamma \dot{\theta}^a \\
&& +g_{\alpha\beta}\ddot{x}^\beta+g_{\alpha a}\ddot{\theta}^a -\frac{1}{2}g_{ab,\alpha}\dot{\theta}^a \dot{\theta}^b + \frac{\partial V}{\partial x^\alpha} =0 \, , \\
 \Phi_a&=& g_{\alpha a,\gamma}\dot{x}^{\gamma}\dot{x}^{\alpha}+g_{ab,\gamma}\dot{x}^{\gamma}\dot{\theta}^b+g_{\alpha a}\ddot{x}^{\alpha}+g_{ab}\ddot{\theta}^b =0 \, .
\end{eqnarray*}
Now consider a controlled Lagrangian with the special matching assumption $g_\rho=g$, that is, 
\begin{eqnarray*}
L_{\tau,\sigma}(x^\alpha,\dot{x}^\alpha,\dot{\theta}^a)&=&L(x^\alpha,\dot{x}^\alpha,\dot{\theta}^a+\tau^a_\alpha\dot{x}^\alpha)+\frac{1}{2}\sigma_{ab}\tau^a_\alpha \tau^b_\beta \dot{x}^\alpha \dot{x}^\beta \, ,
\end{eqnarray*}
and choose feedback controls $u_a$ such that the $\theta^a$-equations for both $L$ and $L_{\tau,\sigma}$ coincide, that is,
\begin{eqnarray} \label{control}
u_a&=&\left( \frac{d}{dt}\left( \frac{\partial L}{\partial \dot{\theta}^a} \right) - \frac{\partial L}{\partial \theta^a} \right)-\left( \frac{d}{dt}\left( \frac{\partial L_{\tau,\sigma}}{\partial \dot{\theta}^a} \right) - \frac{\partial L_{\tau,\sigma}}{\partial \theta^a}  \right) \nonumber \\
&=&  \frac{d}{dt}\left( \frac{\partial L}{\partial \dot{\theta}^a} - \frac{\partial L_{\tau,\sigma}}{\partial \dot{\theta}^a} \right) = \frac{d}{dt}\left(-g_{ab}\tau^b_\beta\dot{x}^\beta\right)=-(g_{ab}\tau^b_\beta)_{,\gamma}\dot{x}^\beta\dot{x}^{\gamma}-g_{ab}\tau^b_\beta \ddot{x}^\beta \, .
\end{eqnarray}
Then the controlled Euler-Lagrange equations (\ref{EL1}) are
\begin{eqnarray}
\tilde{\Phi}_\alpha&=&\Phi_\alpha =0 \, , \label{CEL1}\\
\tilde{\Phi}_a&=& \Phi_a + (g_{ab}\tau^b_\beta)_{,\gamma} \dot{x}^\beta \dot{x}^\gamma + g_{ab}\tau^b_\beta \ddot{x}^\beta=0 \, . \label{CEL2}
\end{eqnarray}
From $\tilde{\Phi}$ we can compute
$$
\tilde{C}=
\left(
\frac{\partial \tilde{\Phi}}{\partial \ddot{q}}
\right)
=
\left(
\begin{array}{cc}
g_{\alpha \beta} & g_{\alpha b} \\
g_{ a \beta}+g_{ad}\tau^d_\beta & g_{ab}
\end{array}
\right) \, ,
$$
which is assumed to be regular. If we introduce the notations $\tilde{W}:=\tilde{C}^{-1}$, $A_{\alpha\beta}=g_{\alpha \beta}-g_{\alpha b}g^{ab}(g_{a \beta} + g_{ad}\tau^d_\beta)$, where $(g^{ab})$ denotes the inverse matrix of  $(g_{ab})$, and also denote the inverse of $(A_{\alpha\beta})$ by $(A^{\alpha\beta})$ then
$$
\tilde{W}=
\left(
\begin{array}{cc}
\tilde{W}^{\alpha \beta} & \tilde{W}^{\alpha b} \\
\tilde{W}^{a \beta} & \tilde{W}^{a b}
\end{array}
\right)
=
\left(
\begin{array}{cc}
A^{\alpha \beta} & -A^{\alpha\gamma}g_{\gamma d} g^{d b} \\
-(g^{ a b}g_{b \gamma}+\tau^a_\gamma)A^{\gamma\beta} & g^{ab}+(g^{ad}g_{d\gamma}+\tau^a_\gamma)A^{\gamma \nu}g_{\nu e}g^{eb}
\end{array}
\right) \, .
$$
Now we will require that the system (\ref{CEL1})-(\ref{CEL2}) be variational using equations (\ref{BB})-(\ref{AA}), and we will impose the solutions given by the components of the Legendre transformation of the controlled Lagrangian $L_{\tau,\sigma}$, that is,
\begin{eqnarray*}
\tilde{F}_\alpha &=& (g_{\alpha \beta}+ g_{\alpha a}\tau^a_\beta +g_{\beta a}\tau^a_\alpha+g_{ab}\tau^a_\alpha\tau^b_\beta+\sigma_{ab}\tau^a_\alpha\tau^b_\beta)\dot{x}^\beta + (g_{\alpha b}+g_{d b}\tau^d_\alpha)\dot{\theta}^b \, ,\\
\tilde{F}_a &=& (g_{\alpha a}+g_{d a}\tau^d_\alpha)\dot{x}^\alpha +g_{ab}\dot{\theta}^b \, .
\end{eqnarray*}

Keeping in mind that $(q^i)=(q^\alpha,q^a)=(x^\alpha,\theta^a)$, we will use subindices $ab$, $a\beta$ or $\alpha\beta$ next to equations (\ref{BB}), (\ref{AB}) and (\ref{AA}) to denote the subset of equations corresponding to $i=a, j=b$ or $i=a, j=\beta$ or $i=\alpha, j=\beta$ respectively.

If we substitute     $\tilde{\Phi}_\alpha$, $\tilde{\Phi}_a$ and the proposed solutions $\tilde{F}_\alpha$, $\tilde{F}_a$ into equations (\ref{BB})-(\ref{AA}) we  get the following:
\begin{itemize}
\item \underline{Equation (\ref{BB})}: vanishes identically for all indices. 
\item \underline{Equation (\ref{AB})}: 
\begin{itemize}
\item[*] $(\ref{AB})_{ab}$ vanishes identically,
\item[*] $(\ref{AB})_{a\beta}$ vanishes identically,
\item[*] $(\ref{AB})_{\alpha b}$ vanishes using M1 and M3 . Alternatively it vanishes if $(g_{ab})$ is constant and the system has one  degree of underactuation, 
\item[*] $(\ref{AB})_{\alpha\beta}$ vanishes using M1, M2 and M3.
\end{itemize}
\item \underline{Equation (\ref{AA})}:
\begin{itemize}
\item[*] $(\ref{AA})_{ab}$ vanishes identically,
\item[*] $(\ref{AA})_{\alpha b}$ vanishes identically,
\item[*] $(\ref{AA})_{\alpha\beta}$ vanishes using M1, M2 and M3. Alternatively it vanishes for systems with one degree of underactuation.
\end{itemize}
\end{itemize}

\bigskip

Detailed computations proving the above statement are given in Appendix \ref{appendix}.

\section{Arbitrary dimension under assumptions SM1, SM2 and with one degree of underactuation} \label{sec-new-tau}

Recall that, as mentioned in Section \ref{intro-relationship}, when solving the Helmholtz conditions we have used the Legendre transformation of the controlled Lagrangian $L_{\tau,\sigma}$. The Helmholtz conditions guarantee the existence of a Lagrangian with the same Legendre transformation as the controlled Lagrangian which we have used, but the potential energy terms need not coincide. Therefore we consider controlled Lagrangians of the form $\tilde{L}_{\tau,\sigma}=K_{\tau,\sigma}-\tilde{V}(x^\alpha,\theta^a)$ with arbitrary $\tilde{V}$.

Since in this section we deal with systems with one degree of underactuation, we will now use the notation $\tau^a_\alpha=\tau^a_1=:\tau^a$, where $a=2,\ldots,n$. We also use a $'$ instead of $_{,1}$ to denote derivative with respect to $x^1=:x$.

\begin{thm} \label{new-tau}
Under assumptions SM1, SM2 and with one degree of underactuation, there is a controlled Lagrangian $\tilde{L}_{\tau,\sigma}$ such that  the Euler-Lagrange equations for $\tilde{L}_{\tau,\sigma}$ are equivalent to the controlled Euler-Lagrange equations (\ref{EL1}) for $L$ if  $\tau^a$ satisfies the ODE system
\begin{equation} \label{ODE-sol}
2\tau^a g_{1 e}g^{ec}g_{1 c}'+2\tau^a g_{1 e}(\tau^e)'-\tau^a g_{11}' + 2g_{11}(\tau^a)' -2g_{1 c}g^{dc}g_{d 1}(\tau^a)' - 2g_{1 c} \tau^c (\tau^a)' =0 \, , 
\end{equation}
for all $a=2,\ldots, n$ . In the particular case when $dim(Q)=2$ we obtain the new solution
\begin{equation} \label{new-tau-2}
\tau(x) = k \sqrt{ g_{11}(x) g_{22} - g_{12}(x)^2} \, ,
\end{equation}
where $k$ is an arbitrary constant. Notice that one degree of underactuation implies that SM4 holds and therefore we are providing an alternative to the solution given by SM3.
\end{thm}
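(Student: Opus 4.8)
Proof proposal for Theorem~\ref{new-tau}.

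The plan is to run the procedure of Section~\ref{sec:ArbDimSpecial} in the restricted setting of the theorem and to show that, under SM1, SM2 and one degree of underactuation, the only surviving implicit Helmholtz condition is a single scalar identity that is forced by the ODE system (\ref{ODE-sol}). I would begin exactly as in Section~\ref{sec:ArbDimSpecial}: form the controlled Euler--Lagrange equations $\tilde\Phi_1=\Phi_1=0$ and $\tilde\Phi_a=\Phi_a+(g_{ab}\tau^b)'\dot x^2+g_{ab}\tau^b\ddot x=0$ with the feedback (\ref{control}), write down the matrix $\tilde C$ and its inverse $\tilde W$, and take as candidate Legendre transformation the components $\tilde F_\alpha,\tilde F_a$ of Section~\ref{sec:ArbDimSpecial} (the potential part of $\tilde L_{\tau,\sigma}$ does not enter (\ref{BB})--(\ref{AA})). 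Specialising the block-by-block analysis of Section~\ref{sec:ArbDimSpecial}, carried out in Appendix~\ref{appendix}, to $(g_{ab})$ constant and one degree of underactuation, all components of (\ref{BB})--(\ref{AA}) vanish under these hypotheses except $(\ref{AB})_{11}$, so this is the only equation that still has to be checked.

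The heart of the argument is then the explicit evaluation of $(\ref{AB})_{11}$ under SM1 and SM2. Since $\tilde F_1$ is linear in the velocities and $i=j=1$, the second-order velocity term and the two $\partial/\partial q^1$ terms in (\ref{AB}) drop out, and using $\sigma_{ab}=\sigma g_{ab}$, $g_{ab}'=0$ and the explicit entries of $\tilde W$ in terms of $(A_{\alpha\beta})$ and $\zeta^a:=g^{ab}g_{1b}$, the equation collapses to a single ODE-type identity in $x$ relating $\tau^a$, $(\tau^a)'$, $g_{11}$, $g_{1a}$, $g_{ab}$, $g^{ab}$ and their $x$-derivatives. I would then verify that (\ref{ODE-sol}) implies this identity by forming two combinations of the system (\ref{ODE-sol}): contracting its $a$-th equation with $g_{1a}$ and summing over $a$ produces the terms in $g_{1a}(\tau^a)'$, while contracting with $g_{ab}\tau^b$ and summing produces the terms in $g_{ab}\tau^a(\tau^b)'=\frac12(g_{ab}\tau^a\tau^b)'$; substituting these into the reduced $(\ref{AB})_{11}$ makes it close. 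Hence (\ref{ODE-sol}) makes all of (\ref{BB})--(\ref{AA}) hold, so by the theory of \cite{BFFM} the controlled system is variational: there is a regular Lagrangian $\hat L$ with $\partial\hat L/\partial\dot q^i=\tilde F_i$ whose Euler--Lagrange equations are equivalent to $\tilde\Phi=0$, that is, to (\ref{EL1}). Since $\hat L$ and the kinetic part $K_{\tau,\sigma}$ have the same momenta, $\hat L-K_{\tau,\sigma}$ is independent of all velocities, so $\hat L=K_{\tau,\sigma}-\tilde V(x^\alpha,\theta^a)=\tilde L_{\tau,\sigma}$ for some function $\tilde V$, which gives the first assertion.

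For $\dim Q=2$ the system (\ref{ODE-sol}) consists of the single equation with $a=2$. Inserting $g^{22}=1/g_{22}$, using that $g_{22}$ is constant, and cancelling the two terms $\pm 2g_{12}\tau\tau'$, it becomes $2D\tau'=D'\tau$ with $D=g_{11}g_{22}-g_{12}^2$; separating variables gives $\tau=k\sqrt D$, which is (\ref{new-tau-2}). That one degree of underactuation forces SM4 is immediate, since then $\alpha=\delta=1$ and SM4 reads $g_{1a}'=g_{1a}'$. To see that (\ref{ODE-sol}) is genuinely a new solution and not SM3, it is cleanest to note in the two-dimensional case that the reduced $(\ref{AB})_{11}$ in fact factors as a nonzero multiple of $(2D\tau'-D'\tau)(g_{12}+\sigma g_{22}\tau)$: the first factor is (\ref{ODE-sol}) and the second is exactly SM3 (equivalently M1 under SM1), so these are the two distinct branches of the same matching requirement.

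The step I expect to be the real obstacle is the explicit reduction of $(\ref{AB})_{11}$ under SM1 and SM2: one must substitute the full $\tilde F_1$, including the $\sigma_{ab}\tau^a_\alpha\tau^b_\beta$ contribution, the modified equations $\tilde\Phi_a$, and all entries of $\tilde W$, and then use SM1 and SM2 to cancel a large number of terms so that what survives is exactly the combination of (\ref{ODE-sol}) identified above. These computations are of the same nature and length as those in Appendix~\ref{appendix}, and I would present them there.
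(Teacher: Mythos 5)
Your proposal is correct and follows essentially the same route as the paper: specialize the Appendix~\ref{appendix} computations so that under SM1, SM2 and one degree of underactuation only $(\ref{AB})_{11}$ survives, reduce it explicitly, and observe that it factors into the SM3/M1 branch times the ODE expression (\ref{ODE-sol}), with the $\dim Q=2$ case integrating to (\ref{new-tau-2}). Your verification by contracting (\ref{ODE-sol}) with $g_{1a}$ and with $g_{ab}\tau^b$ is just the paper's factorization $A^{11}(g_{1d}+\sigma g_{ad}\tau^a)E^d$ read in reverse, so there is no substantive difference.
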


\begin{proof}
From the computations given in Appendix \ref{appendix} (and summarized at the end of Section \ref{sec:ArbDimSpecial})  we can see that Equation (\ref{AA}) vanishes identically since the assumption of one degree of underactuation implies that Equation $(\ref{AA})_{\alpha\beta}$ is void and also that Equation (\ref{AB}) vanishes for indices $ab$, $a\beta$, and $\alpha b$.
Now under assumption SM1, that is $\sigma_{ab}=\sigma g_{ab}$ for some constant $\sigma$,  we compute Equation (\ref{AB}) for indices $\alpha\beta$, which are just $\alpha=\beta=1$. This gives an ODE system as an extra solution, alternative to SM3.

Indeed, the $\dot{\theta}$ components vanish identically using that $(g_{ab})$ is constant and $\alpha=\beta=\eta$, see (\ref{theta-comp}) in Appendix \ref{appendix}. Imposing that the $\dot{x}$ component also vanishes we get 
\begin{eqnarray*}
0&=& -\left( \tau^c - (g_{1 d}\tau^d+\sigma_{ad}\tau^a \tau^d)A^{11}g_{1 e}g^{ec} \right) \left( g_{1 c}'+g_{1 c}' + (g_{cd}\tau^d)' + (g_{cd}\tau^d)' \right)\dot{x}  \\
&& -\left( \delta_{1}^{1}+(g_{1 d}\tau^d+\sigma_{ad}\tau^a\tau^d)A^{11} \right)  (g_{11}' + g_{11}'-g_{11}')\dot{x}  \\
&& + (g_{1 1} + g_{1 d}\tau^d + g_{1 d}\tau^d +g_{de}\tau^d \tau^e +\sigma_{ed}\tau^e \tau^d  )'\dot{x} \\
&=& -\left( \tau^c - (g_{1 d}\tau^d+\sigma_{ad}\tau^a \tau^d)A^{11}g_{1 e}g^{ec} \right) \left( 2g_{1 c}' + 2(g_{cd}\tau^d)'  \right)\dot{x}  \\
&& - g_{11}'\dot{x}-(g_{1 d}\tau^d+\sigma_{ad}\tau^a\tau^d)A^{11} g_{11}' \dot{x}  + g_{11}'\dot{x} + (2 g_{1 d}\tau^d  +g_{de}\tau^d \tau^e +\sigma_{ed}\tau^e \tau^d  )'\dot{x} \\
&=& - \tau^c \left( 2g_{1 c}' + 2(g_{cd}\tau^d)'  \right)\dot{x}
 + (g_{1 d}\tau^d+\sigma_{ad}\tau^a \tau^d)A^{11}g_{1 e}g^{ec}  \left( 2g_{1 c}' + 2(g_{cd}\tau^d)'  \right) \dot{x}  \\
&& -(g_{1 d}\tau^d+\sigma_{ad}\tau^a\tau^d)A^{11} g_{11}' \dot{x}   + (2 g_{1d}'\tau^d+2 g_{1 d}(\tau^d)'  +g_{de}'\tau^d \tau^e + 2g_{de}(\tau^d)' \tau^e + 2\sigma_{ed}(\tau^e)' \tau^d  )\dot{x} \\
&=& A^{11} \left( (g_{1 d}+\sigma_{ad}\tau^a)\tau^d \left(
g_{1 e}g^{ec}  \left( 2g_{1c}' + 2g_{cd}(\tau^d)'  \right) - g_{11}' \right) + 2A_{11}( g_{1 d}(\tau^d)' + \sigma_{da}(\tau^d)' \tau^a ) \right) \\
&=& A^{11}  (g_{1 d}+\sigma_{ad}\tau^a)\left( \tau^d \left( g_{1 e}g^{ec}  \left( 2g_{1c}' + 2g_{cd}(\tau^d)'  \right) - g_{11}' \right) +2 (\tau^d)' A_{11}\right) \\
&=& A^{11} (g_{1 d}+\sigma_{ad}\tau^a)
\left( 2\tau^d g_{1 e}g^{ec}g_{1c}'+2\tau^d g_{1 e}(\tau^e)'-\tau^d g_{11}' + 2g_{11}(\tau^d)' -2g_{1 c}g^{ec}g_{e 1}(\tau^d)' - 2g_{1 c} \tau^c (\tau^d)' \right) \, .
\end{eqnarray*}

Therefore we have the two solutions M1 and
\begin{equation*} 
2\tau^a g_{1 e}g^{ec}g_{1c}'+2\tau^a g_{1 e}(\tau^e)'-\tau^a g_{11}' + 2g_{11}(\tau^a)' -2g_{1 c}g^{dc}g_{d 1}(\tau^a)' - 2g_{1 c} \tau^c (\tau^a)' = 0 \, ,
\end{equation*}
for each $a=2,\ldots,n$.
Notice that in the case when $\mbox{dim}(Q)=2$ the system (\ref{ODE-sol}) becomes
\begin{equation*}
g^{22}(2g_{12}g_{12}'\tau-g_{11}'g_{22}\tau+2g_{11}g_{22}\tau'-2g_{12}^2\tau') = 0 \, ,
\end{equation*}
and the solution is given by 
\begin{equation*} 
\tau(x) = k \sqrt{ g_{11}(x) g_{22} - g_{12}(x)^2} \, ,
\end{equation*}
where $k$ is an arbitrary constant.
\end{proof}

\begin{prop} \label{prop-u}
Under the assumptions of Theorem \ref{new-tau} and using the new solution given by (\ref{ODE-sol}) we have that the control (\ref{control}) is independent of velocities.
\end{prop}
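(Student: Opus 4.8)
The plan is to evaluate the feedback control (\ref{control}) along solutions of the controlled Euler--Lagrange equations, eliminate the acceleration $\ddot{x}$ using those equations, and show that the resulting velocity-dependent term cancels exactly because $\tau^a$ satisfies (\ref{ODE-sol}).

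First I would specialize (\ref{control}) to the present hypotheses: since $(g_{ab})$ is constant by SM2 and there is a single underactuated coordinate $x=x^1$, the formula (\ref{control}) reads
\[
u_a \;=\; -g_{ab}(\tau^b)'\,\dot{x}^2 \;-\; g_{ab}\tau^b\,\ddot{x}\,.
\]
As written this involves $\ddot{x}$, so to display $u_a$ as a genuine feedback I must substitute the value of $\ddot{x}$ determined by the controlled dynamics (\ref{CEL1})--(\ref{CEL2}); this is legitimate because, by Theorem \ref{new-tau}, solutions of $\tilde{L}_{\tau,\sigma}$ are precisely solutions of (\ref{EL1}).

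Next I would solve (\ref{CEL1})--(\ref{CEL2}) for $\ddot{x}$. From $\tilde{\Phi}_a=0$ (using $g_{ab}'=0$) one obtains $g_{ab}\ddot{\theta}^b = -(g_{1a}'+g_{ab}(\tau^b)')\dot{x}^2 - (g_{1a}+g_{ab}\tau^b)\ddot{x}$, hence
\[
\ddot{\theta}^c = -\bigl(g^{ca}g_{1a}'+(\tau^c)'\bigr)\dot{x}^2 - \bigl(g^{ca}g_{1a}+\tau^c\bigr)\ddot{x}\,.
\]
Inserting this into $\Phi_1=\tilde{\Phi}_1=0$, the coefficient of $\ddot{x}$ simplifies, using the symmetry of $(g^{ab})$ and $(g_{ab})$, to exactly $A_{11}=g_{11}-g_{1b}g^{ab}(g_{a1}+g_{ad}\tau^d)$, which is nonzero by regularity of $\tilde{C}$, while the coefficient of $\dot{x}^2$ becomes $P:=\tfrac12 g_{11}'-g_{1c}g^{ca}g_{1a}'-g_{1c}(\tau^c)'$, so that $A_{11}\ddot{x}+P\dot{x}^2+V'=0$. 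Substituting $\ddot{x}=-A_{11}^{-1}(P\dot{x}^2+V')$ back into the displayed $u_a$ gives
\[
u_a \;=\; A_{11}^{-1}g_{ab}\bigl(\tau^b P - (\tau^b)'A_{11}\bigr)\dot{x}^2 \;+\; A_{11}^{-1}g_{ab}\tau^b\,V'\,.
\]

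Finally I would identify the bracket with (\ref{ODE-sol}): grouping the $(\tau^a)'$-terms of (\ref{ODE-sol}) yields the factor $2A_{11}(\tau^a)'$ and grouping the $\tau^a$-terms yields $-2P\tau^a$, so (\ref{ODE-sol}) is equivalent to $(\tau^a)'A_{11}=P\tau^a$ for every $a=2,\dots,n$. Hence $\tau^b P-(\tau^b)'A_{11}=0$, the $\dot{x}^2$ term drops out, and $u_a = A_{11}^{-1}g_{ab}\tau^b V'$. Since $(g_{ab})$ is constant and $A_{11}$, $\tau^b$ and $V$ are functions of $x$ alone, $u_a$ depends only on the configuration variable $x$; in particular it is independent of the velocities, as claimed. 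The only real work lies in the index bookkeeping of the two substitutions and in recognizing that the relevant coefficients are precisely $A_{11}$ and $P$, which parallels the manipulations already performed in Appendix \ref{appendix}.
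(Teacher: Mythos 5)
Your proposal is correct and follows essentially the same route as the paper: eliminate $\ddot{\theta}^a$ and $\ddot{x}$ using $\tilde{\Phi}_a=0$ and $\tilde{\Phi}_1=0$ (with SM2 and one degree of underactuation), substitute $\ddot{x}=-A_{11}^{-1}(P\dot{x}^2+V')$ into the control, and cancel the $\dot{x}^2$ term via (\ref{ODE-sol}), arriving at $u_a=g_{ab}\tau^b A^{11}V'$. Your observation that (\ref{ODE-sol}) is equivalent to $A_{11}(\tau^a)'=P\tau^a$ is just a compact repackaging of the cancellation the paper performs term by term.
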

\begin{proof}
Indeed we have that the equations $\tilde{\Phi}_1=0$ and $\tilde{\Phi}_a=0$ are given by
\begin{eqnarray*}
g_{11}\ddot{x} + g_{1a}\ddot{\theta}^a &=& -\frac{1}{2}g_{11}' \dot{x}^2 -V' \, , \\
(g_{1a}+g_{ab}\tau^b)\ddot{x} + g_{ab}\ddot{\theta}^b &=& -(g_{1a}'+g_{ab}(\tau^b)')\dot{x}^2 \, .
\end{eqnarray*}
Therefore, since $\tilde{C}=\partial \tilde \Phi / \partial \ddot{q}$ is regular, we have 
\begin{equation}
\ddot{x}=A^{11}\left( -\frac{1}{2}g_{11}'\dot{x}^2 +g_{1d}g^{de}g_{1e}'\dot{x}^2 + g_{1d}(\tau^d)'\dot{x}^2 - V' \right)
\end{equation}
and the control (\ref{control}) becomes
\begin{equation}\label{control-position}
u_a= -g_{ab}(\tau^b)'\dot{x}^2-g_{ab}\tau^b A^{11}\left( -\frac{1}{2}g_{11}'\dot{x}^2 +g_{1d}g^{de}g_{1e}'\dot{x}^2 + g_{1d}(\tau^d)'\dot{x}^2 - V' \right) =g_{ab}\tau^b A^{11}V' \, ,
\end{equation}
where in the last equality we have used that $A_{11}=g_{11}-g_{1f}g^{ef}(g_{e1}+g_{ed}\tau^d)$ is nonvanishing in order to get
\begin{eqnarray*}
&&-A_{11}g_{ab}(\tau^b)'-g_{ab}\tau^b\left( -\frac{1}{2}g_{11}' + g_{1d}g^{de}g_{1e}' + g_{1d}(\tau^d)' \right) \\
&=& -g_{11}g_{ab}(\tau^b)'+g_{1f}g^{ef}(g_{e1}+g_{ed}\tau^d)g_{ab}(\tau^b)'+\frac{1}{2}g_{11}'g_{ab}\tau^b - g_{1d}g^{de}g_{1e}'g_{ab}\tau^b - g_{1d}(\tau^d)'g_{ab}\tau^b \\
&=&g_{ab} \left( -g_{11}(\tau^b)' + g_{1f}g^{ef}g_{e1}(\tau^b)' + g_{1d}\tau^d(\tau^b)' +\frac{1}{2}g_{11}'\tau^b  -\tau^b g_{1d}g^{de}g_{1e}' -g_{1d}\tau^b(\tau^d)' \right) \\
&\stackrel{(\ref{ODE-sol})}{=}&0 \, .
\end{eqnarray*}
\end{proof}

\begin{remark}
After substitution of $u_a$ given in Proposition \ref{prop-u} into the system $\tilde{\Phi}_1=0$, $\tilde{\Phi}_a=0$ we get
$$
\left(
\begin{array}{c}
\ddot{x} \\
\ddot{\theta}^a
\end{array}
\right)
=
C^{-1}
\left(
\begin{array}{c}
-\frac{1}{2}g_{11}'\dot{x}^2 - V' \\
-g_{1a}'\dot{x}^2 + g_{ab}\tau^b A^{11}V'
\end{array}
\right) \, .
$$
In dimension $2$ this fits into the class of systems considered in \cite{FM}.
\end{remark}

\section{Examples} \label{sec-matching-examples}

In this section we will see two examples in which alternative $\tau$ solutions can be found to the simplified matching condition SM3. 
The first example fits into Section \ref{sec-new-tau}. We illustrate the new $\tau$ solution for the inverted pendulum on a cart as well as the control given by (\ref{control-position}), which is stabilizing in this case. 
In the second example, which involves a controlled Lagrangian of type $L_{\tau,\sigma,\rho,\epsilon}$, the Helmholtz condition $(\ref{AB})_{\alpha\beta}$ provides an additional solution for $\tau$ as in Theorem \ref{new-tau}. Furthermore, the Helmholtz condition $(\ref{AA})_{\alpha b}$, which vanishes identically in Example \ref{example-inverted-pendulum}, provides now a PDE for $V_\epsilon$, which is more general than the one provided in \cite{BCLMsecond}.

\subsection{Inverted pendulum on a cart} \label{example-inverted-pendulum}

We provide a new stabilizing control for the inverted pendulum on a cart using the solution provided by Theorem \ref{new-tau}.
The system consists of a pendulum of length $l$ and a bob of mass $m$ attached to the top of a cart of mass $M$. The configuration manifold of the system is $Q=S^1\times \mathbb{R}$ with coordinates $(x,s)$ which denote the angle of the pendulum with respect to a vertical line and the position of the cart respectively, as shown in the picture below. The upright position of the pendulum corresponds to $x=0$.

\begin{center}
\begin{tikzpicture}
\draw (0,0) -- (8,0);
\draw (1,-1) -- (1,4);
\draw [->] (1,-0.5) -- (4,-0.5);
\node at (2.5,-0.7) {$s$};
\draw [dashed] (4,-1) -- (4,4);
\draw (2.7,0.3) rectangle (5.3,1.6);
\draw [thick] (4,1.6) -- (5,3.7);
\draw [fill] (5,3.7) circle [radius=0.2];
\draw [fill] (3.35,0.3) circle [radius=0.3];
\draw [fill] (4.65,0.3) circle [radius=0.3];
\draw [<-,domain=70:87] plot ({4+1.3*cos(\x)}, {1.6+1.3*sin(\x)});
\node at (4.3,3.3) {$x$};
\draw [->] (1.5,0.95) -- (2.5,0.95);
\node at (2,1.1) {$u$};
\draw [<->, dotted] (4.3,1.45714) -- (5.3,3.55714);
\node at (5.1,2.5) {$l$};
\end{tikzpicture}
\end{center}

The Lagrangian is given by
$$
L=\frac{1}{2} \left(\alpha  \dot{x}^2+2 \beta  \dot{s} \dot{x} \cos (x)+\gamma  \dot{s}^2\right)+d \cos (x) \, , 
$$
where $\alpha=ml^2,\beta=ml,\gamma=m+M$ and $d=-mgl$ are constants.

If we choose the solution provided by (\ref{new-tau-2}), that is,
$$
\tau(x) = k \sqrt{\alpha\gamma - \beta^2\cos^2(x)} \, ,
$$ 
then from (\ref{control-position}) we obtain the control
\begin{equation} \label{control-inverted-pendulum}
u=g_{22}\tau A^{11}V'=-\frac{d \gamma ^2 k \sin (x) \sqrt{\alpha  \gamma -\beta ^2 \cos ^2(x)}}{\beta  \gamma  k \cos (x) \sqrt{\alpha  \gamma -\beta ^2 \cos ^2(x)}-\alpha  \gamma +\beta ^2 \cos ^2(x)} \, .
\end{equation}

We will now check the stability of the upright position of the pendulum with this control.
To this end we will use the energy function corresponding to the new Lagrangian $\tilde{L}_{\tau,\sigma}$ (with the same Legendre transformation as $L_{\tau,\sigma}$ but a possibly different potential energy term, as remarked above).

When written in explicit form, the controlled Euler-Lagrange equations become
\begin{eqnarray} \label{sode-pendulum-cart}
\ddot{x}&=&
\frac{\sin (x) \left(\frac{d \gamma  \left(\beta ^2 \cos ^2(x)-\alpha  \gamma \right)}{-\beta  \gamma  k \cos (x) \sqrt{\alpha  \gamma -\beta ^2 \cos ^2(x)}+\alpha  \gamma -\beta ^2 \cos
   ^2(x)}-\beta ^2 \dot{x}^2 \cos (x)\right)}{\alpha  \gamma -\beta ^2 \cos ^2(x)} =:F \, , \\
\ddot{s}&=&\sin (x) \left(-\frac{\alpha  d \gamma ^2 k}{\sqrt{\alpha  \gamma -\beta ^2 \cos ^2(x)} \left(\beta  \gamma  k \cos (x) \sqrt{\alpha  \gamma -\beta ^2 \cos ^2(x)}-\alpha  \gamma +\beta ^2 \cos
   ^2(x)\right)}
   \right. \nonumber \\
 &&  
   \left.
   +\frac{\beta  d \cos (x)}{\alpha  \gamma -\beta ^2 \cos ^2(x)}+\frac{\alpha  \beta  \dot{x}^2}{\alpha  \gamma -\beta ^2 \cos ^2(x)}\right) =:G \label{sode-pendulum-cart2} \, .
\end{eqnarray}
We can write the new Lagrangian as 
$$
\tilde{L}_{\tau,\sigma}=\frac{1}{2}\left(\tilde{g}_{11}(x)\dot{x}^2+2\tilde{g}_{12}(x)\dot{x}\dot{s}+\tilde{g}_{22}\dot{s}^2 \right)-\tilde{V}(x,s) \, ,
$$
where
\begin{eqnarray*}
\tilde{g}_{11}(x)&=& \gamma  k^2 (\sigma +1) \left(\alpha  \gamma -\beta ^2 \cos ^2(x)\right)+2 \beta  k \cos (x) \sqrt{\alpha  \gamma -\beta ^2 \cos^2(x)}+\alpha \, , \\
\tilde{g}_{12}(x)&=& \gamma  k \sqrt{\alpha  \gamma -\beta ^2 \cos ^2(x)}+\beta  \cos (x) \, ,\\
\tilde{g}_{22}&=&\gamma \, .
\end{eqnarray*}
Then the equivalence conditions (\ref{multiplier-problem}) are
\begin{eqnarray*}
-\tilde{g}_{11}F-\tilde{g}_{12}G&=&\frac{\partial \tilde{g}_{11}}{\partial x }\dot{x}^2+\frac{\partial \tilde{g}_{12}}{\partial x }\dot{s}\dot{x}- \left(\frac{1}{2}\frac{\partial \tilde{g}_{11}}{\partial x }\dot{x}^2+\frac{\partial \tilde{g}_{12}}{\partial x }\dot{s}\dot{x}+\frac{\partial \tilde{V}}{\partial x} \right) \, , \\
-\tilde{g}_{21}F-\tilde{g}_{22}G&=&\frac{\partial \tilde{g}_{21}}{\partial x }\dot{x}^2-\frac{\partial \tilde{V}}{\partial s} \, ,
\end{eqnarray*}
from which  we get
\begin{eqnarray*}
\frac{\partial \tilde{V}}{\partial x} &=&-\frac{d \left(\gamma ^2 k^2 \sigma +1\right) \sin (x) \left(\alpha  \gamma -\beta ^2 \cos ^2(x)\right)}{\beta  \gamma  k \cos (x)
   \sqrt{\alpha  \gamma -\beta ^2 \cos ^2(x)}-\alpha  \gamma +\beta ^2 \cos ^2(x)} \, , \\
\frac{\partial \tilde{V}}{\partial s} &=& 0 \, .
\end{eqnarray*}
Now we impose conditions such that the new multiplier matrix $(\tilde{g}_{ij})$ will be positive-definite. 
If we introduce the notation 
$$
D=g_{11}g_{22}-g_{12}^2 \quad \mbox{ and } \quad \tilde{D}=\tilde{g}_{11}\tilde{g}_{22}-\tilde{g}_{12}^2 \, ,
$$
then we have $\tilde{D}=D+\sigma(g_{22}\tau)^2$, and therefore we need to choose $\sigma>\frac{-D}{(g_{22}\tau)^2}=\frac{-1}{\gamma^2 k^2}$. We also need $\tilde{g}_{11}>0$, for which it is enough to take $\tau>0$ and $g_{22}(1+\sigma)\tau+2g_{12}>0$. Therefore it is enough to choose $\sigma>0$ and $k>0$.

On the other hand, looking at $\partial \tilde{V} / \partial x$ , notice that we have
$$
d<0, \quad \alpha\gamma -\beta ^2 \cos ^2(x) > 0 \quad \mbox{ and } \quad \gamma ^2 k^2 \sigma +1 >0
$$ 
from the previous choice. Then, in order to get a positive-definite potential energy, we need to impose  
$$\beta  \gamma  k \cos (x)
   \sqrt{\alpha  \gamma -\beta ^2 \cos ^2(x)}-\alpha  \gamma +\beta ^2 \cos ^2(x)>0 \, ,
$$
which, taking $x\in \left( -\frac{\pi}{2},\frac{\pi}{2} \right)$,  reduces to
\begin{equation} \label{stability-cart}
k>\frac{\alpha\gamma-\beta^2\cos^2(x)}{\beta\gamma\cos(x)\sqrt{\alpha\gamma-\beta^2\cos^2(x)}} \, ,
\end{equation}
(but stability of $x=0$ is guaranteed with $k>\frac{\alpha\gamma-\beta^2}{\beta\gamma\sqrt{\alpha\gamma-\beta^2}}$).

Summing up, we can choose positive $\sigma$ and $k$ to guarantee that the new kinetic energy is positive-definite and we can further adjust the constant $k$ in the control to guarantee that the potential energy is positive-definite. Then the energy is a Lyapunov function for (\ref{sode-pendulum-cart})-(\ref{sode-pendulum-cart2}). Notice that the requirement (\ref{stability-cart}) corresponds to $A_{11}<0$.

We now fix the parameters of the system to be $m=0.14 \mbox{ kg}$, $M=0.44 \mbox{ kg}$ and $l=0.215 \mbox{ m}$ as in \cite{BLMfirst} and take the initial conditions to be $\phi(0)=\pi/2-0.2 \mbox{ rad}$, $\dot{\phi}(0)=0.1 \mbox{ rad/s}$, $s(0)=0 \mbox{ m}$, and $\dot{s}(0)=-3 \mbox{ m/s}$, also as in \cite{BLMfirst}.
Below there is a \textsc{matlab}  simulation of this situation with $k=35$:

\includegraphics[height=8cm]{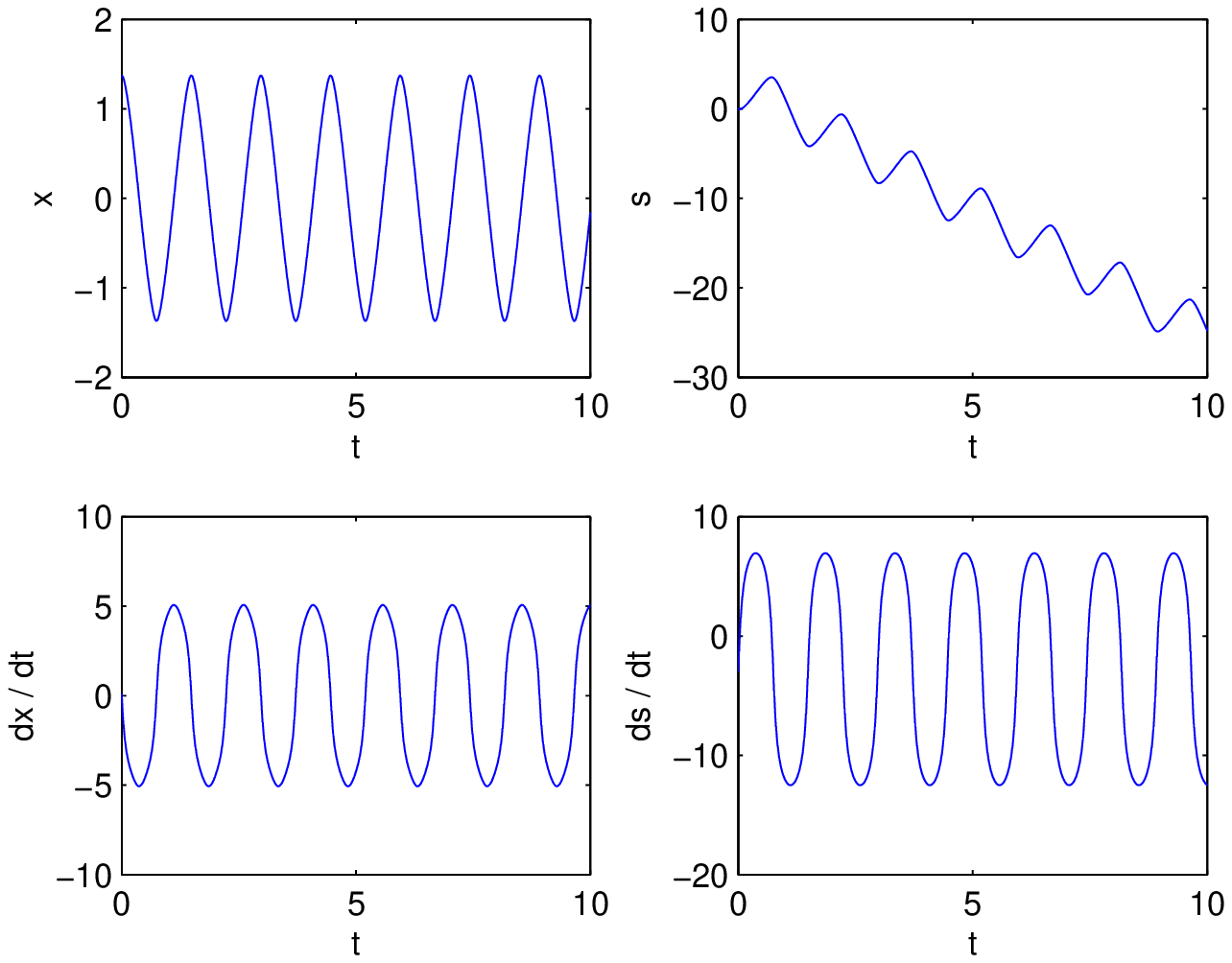}

\begin{remark}
The system (\ref{sode-pendulum-cart})-(\ref{sode-pendulum-cart2}) fits into the class of systems dealt with in \cite{FM} and 
belongs to Case IIa1 from Douglas' classification since
$$
\Phi^2_2=d\gamma\left( 2(\beta^2-\alpha\gamma)\cos(x)\sqrt{\alpha\gamma-\beta^2\cos^2(x)}-2\beta\gamma k (\alpha\gamma+\beta^2\cos^2(x)) \right) \not=0 \, .
$$
This is the same case as the controlled systems that appear in \cite{BLMfirst} and \cite{FM} for the example of the inverted pendulum on a cart.
\end{remark}

\subsection{Inverted pendulum on a cart on an incline} \label{ex:incline}

Consider now the inverted pendulum on a cart, moving on an incline, and denote by $\psi$ the angle between the incline and the horizontal. 
The Lagrangian is given by
\begin{equation*}
L(x,s,\dot{x},\dot{s})=\frac{1}{2}(\alpha\dot{x}^2+2\beta\cos(x-\psi)\dot{x}\dot{s}+\gamma\dot{s}^2) + d\cos(x) +\gamma g \sin(\psi)s \, ,
\end{equation*}
where the potential energy function is $V(x,s)=-d\cos(x) - \gamma g \sin(\psi)s$.

We will assume $g_\rho=\rho g$ for a scalar constant $\rho$, as in \cite{BCLMsecond}, and we will consider the  controlled Lagrangian
\begin{eqnarray*}
L_{\tau,\sigma,\rho,\epsilon} &=& \frac{1}{2}(\alpha\dot{x}^2+2\beta\cos(x-\psi)\dot{x}(\dot{s}+\tau\dot{x})+\gamma(\dot{s}+\tau\dot{x})^2) + d\cos(x) +\gamma g \sin(\psi)s \\
&& +\frac{1}{2}\sigma\gamma\tau^2\dot{x}^2 +\frac{1}{2}(\rho-1)\gamma\left( \dot{s} +\frac{\beta}{\gamma}\cos(x-\psi)\dot{x} + \tau\dot{x} \right)^2 - V_{\epsilon}(x,s) \, .
\end{eqnarray*}

If we require that the $s$ equations for both Lagrangians $L$ and $L_{\tau,\sigma,\rho,\epsilon}$ coincide then we obtain the following control, see \cite[Equation (14)]{BCLMsecond}:
\begin{eqnarray} \label{control-ipci}
u &=& \frac{\rho-1}{\rho}\frac{\partial V}{\partial s} - \frac{\partial V_{\epsilon}}{\partial s} - \frac{d}{dt}(\gamma\tau\dot{x})  \nonumber \\
&=& \frac{1-\rho}{\rho}\gamma g \sin(\psi) - \frac{1}{\rho}\frac{\partial V_{\epsilon}}{\partial s} - \gamma\tau\ddot{x} -\gamma\tau'\dot{x}^2 \, .
\end{eqnarray}
As in Section \ref{sec:ArbDimSpecial} we require now that the system of controlled Euler-Lagrange equations (\ref{EL1}), which in this case becomes
\begin{eqnarray} 
\alpha  \ddot{x}+\beta  \ddot{s} \cos (\psi -x)+ d\sin (x)&=&0 \, , \label{SODEipci-x} \\ 
\dot{x}^2\left(\beta\sin(\psi-x)+\gamma\tau'\right)
+(\beta\cos(\psi-x)+\gamma\tau)\ddot{x} +\gamma\ddot{s} 
+\frac{1}{\rho}\frac{\partial V_{\epsilon}}{\partial s} -\frac{g\gamma}{\rho}\sin(\psi )&=&0 \, , \label{SODEipci-s}
\end{eqnarray}
be variational for a Lagrangian function of the type $L_{\tau,\sigma,\rho,\epsilon}$. Notice that in this case the potential energy term is already free, in contrast to the case $L_{\tau,\sigma,\rho}$ but in similar fashion
 to $\tilde{L}_{\tau,\sigma,\rho}$.

If we write the Helmholtz conditions (\ref{BB})-(\ref{AA}) for the controlled SODE (\ref{SODEipci-x})-(\ref{SODEipci-s}) and impose as a solution the components of the Legendre transformation corresponding to $L_{\tau,\sigma,\rho,\epsilon}$, that is,
\begin{eqnarray*}
F_1&=& \left(\alpha + \frac{\beta^2(\rho-1)\cos^2(\psi -x)}{\gamma}+2\beta\rho\tau\cos(\psi -x)+\gamma(\rho +\sigma )\tau^2\right)\dot{x} \\
&& + \rho(\beta\cos(\psi -x)+\gamma\tau)\dot{s} \, , \\
F_2&=& \rho(\beta \cos(\psi -x)+\gamma\tau)\dot{x}+ \rho\gamma \dot{s} \, ,
\end{eqnarray*}
then we obtain the following:
\begin{itemize}
\item \underline{Equation (\ref{BB})}: vanishes identically for all indices. 
\item \underline{Equation (\ref{AB})}: 
\begin{itemize}
\item[*] $(\ref{AB})_{ab}$ vanishes identically,
\item[*] $(\ref{AB})_{a\beta}$ vanishes identically,
\item[*] $(\ref{AB})_{\alpha b}$ vanishes identically,
\item[*] $(\ref{AB})_{\alpha\beta}$ becomes the following equation:
\begin{equation*}
\frac{\dot{x}\left( \beta\cos(\psi-x)+\gamma\sigma\tau \right) \left( -\beta^2 \sin(2(\psi-x))\tau+(\beta^2-2\alpha\gamma+\beta^2\cos(2(\psi-x))\tau') \right)}{-\alpha\gamma + \beta^2\cos^2(\psi-x) + \beta\gamma\cos(\psi-x)\tau}=0 \, .
\end{equation*}
Therefore we obtain the two solutions
\begin{eqnarray}
\tau&=&-\frac{\beta}{\gamma\sigma}\cos(x-\psi) \, , \label{tau:old:ipci}\\
\tau&=&k\sqrt{\alpha\gamma-\beta^2\cos^2(x-\psi)} \, , \label{tau:new:ipci} 
\end{eqnarray}
where $k$ is a constant. 
The first one corresponds to the simplified matching condition SM3, while  
 the second one coincides with (\ref{new-tau-2}).
\end{itemize}
\item \underline{Equation (\ref{AA})}:
\begin{itemize}
\item[*] $(\ref{AA})_{\alpha b}$ becomes the following second order linear PDE for $V_{\epsilon}(x,s)$:
\begin{equation} \label{PDE-potencial}
A(x)\frac{\partial^2 V_{\epsilon}}{\partial s^2} + \frac{\partial^2 V_{\epsilon}}{\partial s \partial x} = 0 \, , 
\end{equation}
where 
\begin{eqnarray*}
A(x) &=& \frac{\frac{1}{2}\beta(\rho -1)\cos(\psi -x) \left(-2 \alpha\gamma +\beta ^2 \cos(2 (\psi -x))+\beta ^2\right)}{\gamma\rho\left(\alpha\gamma-\beta^2 \cos^2(\psi-x)-\beta\gamma  \tau \cos(\psi -x)\right)} \\
	&& + \frac{\beta\gamma^2 (\rho+\sigma)\tau^2 \cos(\psi -x)+\gamma\rho\tau \left(2 \beta^2 \cos^2(\psi -x)-\alpha \gamma\right)}{\gamma\rho\left(\alpha\gamma -\beta^2 \cos^2(\psi-x)-\beta\gamma\tau \cos(\psi -x)\right)} \, .
\end{eqnarray*} 
\end{itemize}
\end{itemize}

If we use the solution (\ref{tau:old:ipci}) to Equation $(\ref{AB})_{\alpha\beta}$  we obtain
$$
A(x)=-\frac{\beta\cos(\psi-x)(\rho(-1+\sigma)-\sigma)}{\gamma\rho\sigma} \, .
$$
According to \cite{BCLMsecond}, the assumption SM5 implies that the PDE 
\begin{equation} \label{pde-SM5}
-\left( \frac{\partial V}{\partial s}+\frac{\partial V_\epsilon}{\partial s} \right)\left( -\frac{1}{\sigma} + \frac{\rho-1}{\rho} \right)g^{ad}g_{\alpha d}+\frac{\partial V_\epsilon}{\partial x}=0
\end{equation}
admits a solution $V_\epsilon$, which provides the proper adjustment of the potential energy term for the controlled Lagrangian $L_{\tau,\sigma,\rho,\epsilon}$. In this example, if we take a derivative of (\ref{pde-SM5}) with respect to $s$ we obtain (\ref{PDE-potencial}). Therefore as a particular solution to (\ref{PDE-potencial}) we recover $V_{\epsilon}=\frac{\epsilon d \gamma^2 y^2}{2\beta^2}$, which is the solution to (\ref{pde-SM5}) given in \cite{BCLMsecond}, where $y=s+\left(-\frac{1}{\sigma}+\frac{\rho-1}{\rho}\right)\frac{\beta}{\gamma}(\sin(x-\psi)+\sin(\psi))$.

On the other hand if we use the solution (\ref{tau:new:ipci}) to Equation $(\ref{AB})_{\alpha\beta}$ then we get
\begin{eqnarray*}
A(x) &=& \frac{\beta(\gamma^2 k^2 (\rho +\sigma) - \rho +1) \cos(\psi -x)\left(\alpha\gamma-\beta^2 \cos^2(\psi -x)\right)}{\gamma\rho \left(-\beta\gamma  k \cos(\psi-x) \sqrt{\alpha\gamma -\beta^2 \cos^2(\psi -x)}+\alpha\gamma-\beta^2 \cos^2(\psi -x)\right)} \\
&& +\frac{\gamma k \rho \sqrt{\alpha\gamma -\beta^2 \cos^2(\psi -x)} \left(2 \beta^2 \cos ^2(\psi -x)-\alpha  \gamma \right)
}{\gamma  \rho  \left(-\beta  \gamma  k \cos(\psi-x) \sqrt{\alpha  \gamma -\beta^2 \cos^2(\psi -x)}+\alpha  \gamma -\beta^2 \cos ^2(\psi -x)\right)} \, .
\end{eqnarray*} 

In this case we can consider a solution to (\ref{PDE-potencial}) of the form
\begin{equation} \label{new-Veps}
V_\epsilon=\gamma g\sin(\psi)s+\frac{1}{2}s^2-sh(x)+G(x)-s_0s + s_0h(x) \, ,
\end{equation}
where $h(x)=\int_0^x A(r)dr$, $s_0$ is a constant and we assume $G'(0)=0$.
Then the potential energy term for $L_{\tau,\sigma,\rho,\epsilon}$ becomes
$$
V_T:=V+V_\epsilon=-d\cos(x)+\frac{1}{2}s^2-sh(x)+G(x)-s_0s + s_0h(x) 
$$
and we have
\begin{equation*}
	\left.\frac{\partial V_T}{\partial x} \right|_{x=0,s=s_0}=G'(0)=0 \, , \quad
	\left.\frac{\partial V_T}{\partial s} \right|_{x=0,s=s_0}=h(0)=0 \, ,
\end{equation*}
that is, with appropriate choice of $V_\epsilon$ we can get that $(x=0,s=s_0)$ is a critical point of $V_T$. We also have
$$
\left.
\left(
\begin{array}{cc}
	\frac{\partial^2 V_T}{\partial x^2} & \frac{\partial^2 V_T}{\partial x\partial s} \\
	\frac{\partial^2 V_T}{\partial x\partial s} & \frac{\partial^2 V_T}{\partial s^2}
\end{array}
\right)
\right|_{(x=0,s=s_0)}
=
\left(
\begin{array}{cc}
d+G''(0) & -A(0) \\
-A(0) & 1
\end{array}
\right) \, .
$$
If we take $G(x)=cx^2$ for some constant $c$, then it is enough to choose $c>\frac{-d+A(0)^2}{2}$ in order to ensure that the above matrix is positive-definite.

On the other hand, notice that the multipliers corresponding to the controlled Lagrangian $L_{\tau,\sigma,\rho,\epsilon}$ are
\begin{eqnarray*}
\bar{g}_{11}&=& \alpha+\frac{\beta^2(\rho-1)\cos^2(\psi-x)}{\gamma}+2\beta\rho\cos(\psi-x)\tau+\gamma(\rho+\sigma)\tau^2 \, , \\
\bar{g}_{12}&=& \rho(\beta\cos(\psi-x)+\gamma\tau) \, , \\
\bar{g}_{22}&=& \rho\gamma \, . 
\end{eqnarray*}

If $D=g_{11}g_{22}-g_{12}^2$ then $\bar{D}=\bar{g}_{11}\bar{g}_{22}-\bar{g}_{12}^2=\rho(D+\sigma(g_{22}\tau)^2)$. As in the previous example, it is enough to choose $k,\sigma>0$ and $\rho>1$ in order to ensure that $(\bar{g}_{ij})$ is positive-definite.

Summing up, the new energy function, which is 
\begin{eqnarray*}
E_{L_{\tau,\sigma,\rho,\epsilon}} &=& \frac{1}{2}(\alpha\dot{x}^2+2\beta\cos(x-\psi)\dot{x}(\dot{s}+\tau\dot{x})+\gamma(\dot{s}+\tau\dot{x})^2) - d\cos(x) -\gamma g \sin(\psi)s \\
&& +\frac{1}{2}\sigma\gamma\tau^2\dot{x}^2 +\frac{1}{2}(\rho-1)\gamma\left( \dot{s} +\frac{\beta}{\gamma}\cos(x-\psi)\dot{x} + \tau\dot{x} \right)^2 + V_{\epsilon}(x,s) \, ,
\end{eqnarray*}
provides a Lyapunov function for the controlled system (\ref{SODEipci-x})-(\ref{SODEipci-s}) if we make the appropriate choices of $V_\epsilon$ and parameters $k$, $\sigma$ and $\rho$. 
More precisely we can choose for instance $V_\epsilon$ given in (\ref{new-Veps}), $k,\sigma>0$ and $\rho>1$. Then $E_{L_{\tau,\sigma,\rho,\epsilon}} $ is a Lyapunov function and therefore (\ref{control-ipci}) is a stabilizing control.

\section{Conclusions and future directions}

In this paper we have used the Helmholtz conditions, which are necessary and sufficient conditions for a SODE to be equivalent to a system of Euler-Lagrange equations, in order to recover the matching conditions given in \cite{BLMfirst}. Using the Helmholtz conditions we can also obtain new matching conditions for a class of mechanical systems. This strategy may also be used in order to derive matching conditions in other situations. For instance, in Example \ref{ex:incline} we have shown by ad hoc computations how to obtain a new stabilizing control for the inverted pendulum on a cart on an incline.  This suggests that analogous computations to the ones given in Appendix \ref{appendix} may also be  carried out for $L_{\tau,\sigma,\rho,\epsilon}$ in order to obtain a new $\tau$ solution and a more general PDE for $V_{\epsilon}$. The matching techniques have also been studied for discrete systems  \cite{BLMZdiscrete} and Euler-Poincar\'{e} systems \cite{BLMpoincare}, but there are also Helmholtz conditions available in these settings \cite{CM2008,BFM2,BFFM}, pointing to a possible application of these Helmholtz conditions to the problem of stabilization of an equilibrium.

\appendix

\section{Matching conditions M1-M3 from Helmholtz conditions} \label{appendix}

Here we provide the detailed computations proving the statement at the end of Section  \ref{sec:ArbDimSpecial}, regarding the vanishing of the implicit Helmholtz conditions (\ref{BB})-(\ref{AA}).

We will need
\begin{eqnarray}
\frac{\partial \tilde{\Phi}_c}{\partial \dot{x}^\beta} &=& \left( g_{\beta c,\gamma}+g_{\gamma c,\beta} + (g_{cd}\tau^d_\beta)_{,\gamma} + (g_{cd}\tau^d_\gamma)_{,\beta} \right)\dot{x}^\gamma +g_{cd,\beta} \dot{\theta}^d \, , \label{phi-c-dbeta} \\
\frac{\partial \tilde{\Phi}_a}{\partial x^\alpha} &=& (g_{\nu a, \gamma\alpha}+(g_{ad}\tau^d_\nu)_{\alpha\gamma})\dot{x}^\gamma \dot{x}^\nu+g_{ab,\gamma\alpha}\dot{x}^\gamma\dot{\theta}^b +g_{ab,\alpha}\ddot{\theta}^b  \nonumber \\
&& +(g_{\gamma a ,\alpha}+g_{ad,\alpha}\tau^d_\gamma+g_{ad}\tau^d_{\gamma,\alpha})\ddot{x}^\gamma \, ,  \label{phi-a-alpha} \\
\frac{\partial \tilde{\Phi}_\nu}{\partial \dot{x}^\beta}&=&(g_{\nu\beta,\gamma} + g_{\nu\gamma,\beta}-g_{\gamma\beta,\nu})\dot{x}^\gamma+(g_{\nu a,\beta}-g_{\beta a,\nu})\dot{\theta}^a \, , \label{phi-nu-beta} \\
\frac{\partial \tilde{\Phi}_\gamma }{\partial x^\alpha}&=& g_{\gamma \nu,\eta \alpha}\dot{x}^\eta \dot{x}^\nu+g_{\gamma a,\nu\alpha}\dot{x}^\nu\dot{\theta^a}+g_{\gamma\nu,\alpha}\ddot{x}^\nu + g_{\gamma a,\alpha}\ddot{\theta}^a  \nonumber \\
&& -\frac{1}{2}g_{\nu\eta,\gamma\alpha}\dot{x}^\nu \dot{x}^\eta -g_{\nu a,\gamma \alpha}\dot{x}^\nu \dot{\theta}^a - \frac{1}{2}g_{ab,\gamma\alpha}\dot{\theta}^a\dot{\theta}^b + \frac{\partial^2 V}{\partial x^\alpha \partial x^\gamma} \, ,  \label{phi-gamma-alpha}
\end{eqnarray} 
and also the following expressions:
\begin{eqnarray}
\frac{\partial \tilde{F}_\alpha}{\partial \dot{x}^\gamma}\tilde{W}^{\gamma \nu} +\frac{\partial \tilde{F}_\alpha}{\partial \dot{\theta}^a}\tilde{W}^{a \nu} &=& (g_{\alpha\gamma}+g_{\alpha d}\tau^d_\gamma + g_{\gamma d}\tau^d_\alpha +g_{fd}\tau^f_\alpha \tau^d_\gamma +\sigma_{fd}\tau^f_\alpha \tau^d_\gamma )A^{\gamma\nu} \nonumber \\
&& -(g_{\alpha a}+g_{da}\tau^d_\alpha)\left( g^{ae}g_{e\gamma}+\tau^a_\gamma \right)A^{\gamma\nu} \nonumber \\
&=& (g_{\alpha\gamma}-g_{\alpha a}g^{ae}g_{e\gamma} +\sigma_{ad}\tau^a_\alpha\tau^d_\gamma )A^{\gamma\nu}  \nonumber\\
&=& \delta_{\alpha}^{\nu}+(g_{\alpha d}\tau^d_\gamma+\sigma_{ad}\tau^a_\alpha\tau^d_\gamma)A^{\gamma\nu} \, ,  \label{FW1}\\
\frac{\partial \tilde{F}_\alpha}{\partial \dot{x}^\gamma}\tilde{W}^{\gamma c} +\frac{\partial \tilde{F}_\alpha}{\partial \dot{\theta}^a}\tilde{W}^{a c} &=& (g_{\alpha\gamma}+g_{\alpha d}\tau^d_\gamma + g_{\gamma d}\tau^d_\alpha +g_{fd}\tau^f_\alpha \tau^d_\gamma +\sigma_{fd}\tau^f_\alpha \tau^d_\gamma )\left( -A^{\gamma\eta}g_{\eta e}g^{ec} \right) \nonumber \\
&& +(g_{\alpha a}+g_{da}\tau^d_\alpha)\left( g^{ac}+(g^{af}g_{f\eta}+\tau^a_\eta)A^{\eta\nu}g_{\nu e}g^{ec} \right) \nonumber \\
&=& (-g_{\alpha\gamma}-\sigma_{ad}\tau^a_\alpha\tau^d_\gamma+g_{\alpha a}g^{af}g_{f\gamma})A^{\gamma \eta}g_{\eta e}g^{ec}+g_{\alpha a}g^{ac}+\tau^c_\alpha \nonumber \\
&=& \tau^c_\alpha - (g_{\alpha d}\tau^d_\gamma+\sigma_{ad}\tau^a_\alpha \tau^d_\gamma)A^{\gamma\eta}g_{\eta e}g^{ec} \, . \label{FW2}
\end{eqnarray}

\underline{Equation (\ref{BB}) vanishes  identically for all indices}: 
\begin{eqnarray*}
(\ref{BB})_{a b} & = & \frac{\partial \tilde{F}_a}{\partial \dot{\theta}^b}-\frac{\partial \tilde{F}_b}{\partial \dot{\theta}^a} = g_{ab}-g_{ba}=0 \, ,\\
(\ref{BB})_{a \beta} & = & \frac{\partial \tilde{F}_a}{\partial \dot{x}^\beta}-\frac{\partial \tilde{F}_\beta}{\partial \dot{\theta}^a} = g_{\beta a}+g_{ad}\tau^d_\beta - (g_{\beta a}+g_{da}\tau^d_\beta) =0 \, , \\
(\ref{BB})_{\alpha \beta} & = & \frac{\partial \tilde{F}_\alpha}{\partial \dot{x}^\beta}-\frac{\partial \tilde{F}_\beta}{\partial \dot{x}^\alpha}=(g_{\alpha \beta}+ g_{\alpha a}\tau^a_\beta +g_{\beta a}\tau^a_\alpha+g_{ab}\tau^a_\alpha\tau^b_\beta+\sigma_{ab}\tau^a_\alpha\tau^b_\beta) \\
&& -(g_{\beta \alpha}+ g_{\beta a}\tau^a_\alpha +g_{\alpha a}\tau^a_\beta+g_{ab}\tau^a_\beta\tau^b_\alpha+\sigma_{ab}\tau^a_\beta\tau^b_\alpha) =0 \, .
\end{eqnarray*}

\underline{Equation (\ref{AB}) vanishes identically for indices $ab$ and $a\beta$}:
\begin{eqnarray*}
(\ref{AB})_{ab} & = & \frac{\partial^2 \tilde{F}_a}{\partial \dot{\theta}^b \partial q^k}\dot{q}^k+\frac{\partial \tilde{F}_a}{\partial \theta^b}+\frac{\partial^2 \tilde{F}_a}{\partial \dot{\theta}^b \partial \dot{q}^k}\ddot{q}^k-\frac{\partial \tilde{F}_b}{\partial \theta^a} - \frac{\partial \tilde{F}_a}{\partial \dot{q}^k}\frac{\partial \tilde{\Phi}_r}{\partial \dot{\theta}^b}\tilde{W}^{k r} \\
&=& \frac{\partial^2 \tilde{F}_a}{\partial \dot{\theta}^b \partial x^\gamma}\dot{x}^\gamma - \frac{\partial \tilde{F}_a}{\partial \dot{x}^\gamma} \left(\frac{\partial \Phi_c}{\partial \dot{\theta}^b}\tilde{W}^{\gamma c} + \frac{\partial \Phi_\beta}{\partial \dot{\theta}^b}\tilde{W}^{\gamma \beta} \right) - \frac{\partial \tilde{F}_a}{\partial \dot{\theta}^d} \left(\frac{\partial \Phi_c}{\partial \dot{\theta}^b}\tilde{W}^{d c} + \frac{\partial \Phi_\beta}{\partial \dot{\theta}^b}\tilde{W}^{d \beta} \right) \\
&=& g_{ab,\gamma}\dot{x}^\gamma -(\tilde{C}_{a \gamma}\tilde{W}^{\gamma c}+\tilde{C}_{a d}\tilde{W}^{d c})\frac{\partial \Phi_c}{\partial \dot{\theta}^b} -(\tilde{C}_{a \gamma}\tilde{W}^{\gamma \beta}+\tilde{C}_{a d}\tilde{W}^{d \beta})\frac{\partial \Phi_\beta}{\partial \dot{\theta}^b} \\
&=& g_{ab,\gamma}\dot{x}^\gamma - \frac{\partial \Phi_a}{\partial \dot{\theta}^b} =0 \\
(\ref{AB})_{a\beta} & = & \frac{\partial^2 \tilde{F}_a}{\partial \dot{x}^\beta \partial q^k}\dot{q}^k+\frac{\partial \tilde{F}_a}{\partial x^\beta}+\frac{\partial^2 \tilde{F}_a}{\partial \dot{x}^\beta \partial \dot{q}^k}\ddot{q}^k-\frac{\partial \tilde{F}_\beta}{\partial \theta^a} - \frac{\partial \tilde{F}_a}{\partial \dot{q}^k}\frac{\partial \tilde{\Phi}_r}{\partial \dot{x}^\beta}\tilde{W}^{k r} \\
&=&(g_{\beta a}+g_{ab}\tau^b_\beta)_{,\gamma}\dot{x}^\gamma + (g_{\gamma a}+g_{ab}\tau^b_\gamma)_{,\beta}\dot{x}^\gamma +g_{ab,\beta}\dot{\theta}^b \\
&& - \frac{\partial \tilde{F}_a}{\partial \dot{x}^\gamma} \left(\frac{\partial \tilde{\Phi}_c}{\partial \dot{x}^\beta}\tilde{W}^{\gamma c} + \frac{\partial \Phi_\nu}{\partial \dot{x}^\beta}\tilde{W}^{\gamma \nu} \right) - \frac{\partial \tilde{F}_a}{\partial \dot{\theta}^d} \left(\frac{\partial \tilde{\Phi}_c}{\partial \dot{x}^\beta}\tilde{W}^{d c} + \frac{\partial \Phi_\nu}{\partial \dot{x}^\beta}\tilde{W}^{d \nu} \right) \\
&=&(g_{\beta a}+g_{ab}\tau^b_\beta)_{,\gamma}\dot{x}^\gamma + (g_{\gamma a}+g_{ab}\tau^b_\gamma)_{,\beta}\dot{x}^\gamma +g_{ab,\beta}\dot{\theta}^b \\
&& -(\tilde{C}_{a \gamma}\tilde{W}^{\gamma c}+\tilde{C}_{a d}\tilde{W}^{d c})\frac{\partial \tilde{\Phi}_c}{\partial \dot{x}^\beta} -(\tilde{C}_{a \gamma}\tilde{W}^{\gamma \nu}+\tilde{C}_{a d}\tilde{W}^{d \nu})\frac{\partial \Phi_\nu}{\partial \dot{x}^\beta} \\
&=&(g_{\beta a}+g_{ab}\tau^b_\beta)_{,\gamma}\dot{x}^\gamma + (g_{\gamma a}+g_{ab}\tau^b_\gamma)_{,\beta}\dot{x}^\gamma +g_{ab,\beta}\dot{\theta}^b -\frac{\partial \tilde{\Phi}_a}{\partial \dot{x}^\beta} \\
&\stackrel{(\ref{phi-c-dbeta})}{=}& 0 \, .
\end{eqnarray*}
\uline{For indices $\alpha b$ Equation (\ref{AB}) vanishes using M1 and M3 . Alternatively it vanishes if $(g_{ab})$ is constant and the system has one degree of underactuation}:
\begin{eqnarray*}
(\ref{AB})_{\alpha b} & = & \frac{\partial^2 \tilde{F}_\alpha}{\partial \dot{\theta}^b \partial q^k}\dot{q}^k+\frac{\partial \tilde{F}_\alpha}{\partial \theta^b}+\frac{\partial^2 \tilde{F}_\alpha}{\partial \dot{\theta}^b \partial \dot{q}^k}\ddot{q}^k-\frac{\partial \tilde{F}_b}{\partial x^\alpha} - \frac{\partial \tilde{F}_\alpha}{\partial \dot{q}^k}\frac{\partial \tilde{\Phi}_r}{\partial \dot{\theta}^b}\tilde{W}^{k r} \\
& = & \frac{\partial^2 \tilde{F}_\alpha}{\partial \dot{\theta}^b \partial x^\gamma}\dot{x}^\gamma-\frac{\partial \tilde{F}_b}{\partial x^\alpha} - \frac{\partial \tilde{F}_\alpha}{\partial \dot{x}^\gamma}\left(\frac{\partial \tilde{\Phi}_c}{\partial \dot{\theta}^b}\tilde{W}^{\gamma c} +\frac{\partial \tilde{\Phi}_\nu}{\partial \dot{\theta}^b}\tilde{W}^{\gamma \nu} \right)
-  \frac{\partial \tilde{F}_\alpha}{\partial \dot{\theta}^a}\left( \frac{\partial \tilde{\Phi}_c}{\partial \dot{\theta}^b}\tilde{W}^{a c} +\frac{\partial \tilde{\Phi}_\nu}{\partial \dot{\theta}^b}\tilde{W}^{a \nu}  \right) \\
& = & \frac{\partial^2 \tilde{F}_\alpha}{\partial \dot{\theta}^b \partial x^\gamma}\dot{x}^\gamma-\frac{\partial \tilde{F}_b}{\partial x^\alpha}- \left( \frac{\partial \tilde{F}_\alpha}{\partial \dot{x}^\gamma}\tilde{W}^{\gamma c} +\frac{\partial \tilde{F}_\alpha}{\partial \dot{\theta}^a}\tilde{W}^{a c} \right)\frac{\partial \tilde{\Phi}_c}{\partial \dot{\theta}^b} - \left( \frac{\partial \tilde{F}_\alpha}{\partial \dot{x}^\gamma}\tilde{W}^{\gamma \nu} +\frac{\partial \tilde{F}_\alpha}{\partial \dot{\theta}^a}\tilde{W}^{a \nu} \right)\frac{\partial \tilde{\Phi}_\nu}{\partial \dot{\theta}^b} \\
& \stackrel{(\ref{FW1}),(\ref{FW2})}{=} &  (g_{\alpha b}+g_{db}\tau^d_\alpha)_{,\gamma}\dot{x}^\gamma - (g_{\gamma b}+g_{bd}\tau^d_\gamma)_{,\alpha}\dot{x}^\gamma -g_{bd,\alpha}\dot{\theta}^d\\
&& -\left(\tau^c_\alpha - (g_{\alpha d}\tau^d_\gamma+\sigma_{ad}\tau^a_\alpha \tau^d_\gamma)A^{\gamma\eta}g_{\eta e}g^{ec} \right) g_{cb,\gamma}\dot{x}^\gamma  \\
&& - \left( \delta_{\alpha}^{\nu}+(g_{\alpha d}\tau^d_\gamma+\sigma_{ad}\tau^a_\alpha\tau^d_\gamma)A^{\gamma\nu} \right) \left(\left( g_{\nu b,\gamma}-g_{\gamma b,\nu}\right)\dot{x}^\gamma -g_{db,\nu}\dot{\theta}^d \right) \, .
\end{eqnarray*}
The $\dot{\theta}$ component becomes
\begin{equation*}
\left (-g_{bd,\alpha} + \left( \delta_{\alpha}^{\nu}+(g_{\alpha e}\tau^e_\gamma+\sigma_{ae}\tau^a_\alpha\tau^e_\gamma)A^{\gamma\nu} \right)g_{db,\nu} \right) \dot{\theta}^d = (g_{\alpha e}+\sigma_{ae}\tau^a_\alpha)\tau^e_\gamma A^{\gamma\nu}g_{db,\nu} \dot{\theta}^d \, ,
\end{equation*}
from which we can clearly see the solution M1 (but there are more).
The $\dot{x}$ component becomes
\begin{eqnarray} \label{comp}
&&(g_{\alpha b}+g_{db}\tau^d_\alpha)_{,\gamma}\dot{x}^\gamma - (g_{\gamma b}+g_{bd}\tau^d_\gamma)_{,\alpha}\dot{x}^\gamma
 -\left(\tau^c_\alpha - (g_{\alpha d}\tau^d_\gamma+\sigma_{ad}\tau^a_\alpha \tau^d_\gamma)A^{\gamma\eta}g_{\eta e}g^{ec} \right)  g_{cb,\gamma}\dot{x}^\gamma  \nonumber
\\
&&
- \left( \delta_{\alpha}^{\nu}+(g_{\alpha d}\tau^d_\gamma+\sigma_{ad}\tau^a_\alpha\tau^d_\gamma)A^{\gamma\nu} \right) \left( g_{\nu b,\gamma}-g_{\gamma b,\nu}\right)\dot{x}^\gamma \nonumber \\
&=& \left( g_{db}\left( \tau^d_{\alpha,\gamma} - \tau^d_{\gamma,\alpha} \right) - g_{bd,\alpha}\tau^d_\gamma
+ (g_{\alpha d}+\sigma_{ad}\tau^a_\alpha)\tau^d_\gamma A^{\gamma\eta} \left( g_{\eta e}g^{ec}g_{cb,\gamma}-g_{\eta b,\gamma} +g_{\gamma b,\eta} \right) \right)\dot{x}^\gamma \, ,
\end{eqnarray}
from where assuming M1 we obtain M3 as a solution.

On the other hand notice that if $(g_{ab})$ is constant, that is, SM2 holds, and the system has one degree of underacuation then the equation vanishes identically without imposing M1 nor M3. It is also enough to assume the simplified matching conditions SM2, SM4 and the matching condition M3, without assuming M1.

\underline{For indices $\alpha\beta$ Equation (\ref{AB}) vanishes using M1, M2 and M3}:
\begin{eqnarray*}
(\ref{AB})_{\alpha \beta} & = & \frac{\partial^2 \tilde{F}_\alpha}{\partial \dot{x}^\beta \partial q^k}\dot{q}^k+\frac{\partial \tilde{F}_\alpha}{\partial x^\beta}+\frac{\partial^2 \tilde{F}_\alpha}{\partial \dot{x}^\beta \partial \dot{q}^k}\ddot{q}^k-\frac{\partial \tilde{F}_\beta}{\partial x^\alpha} - \frac{\partial \tilde{F}_\alpha}{\partial \dot{q}^k}\frac{\partial \tilde{\Phi}_r}{\partial \dot{x}^\beta}\tilde{W}^{k r}  \\
&=& \left(-\frac{\partial \tilde{F}_\alpha}{\partial \dot{x}^\gamma}\tilde{W}^{\gamma c} - \frac{\partial \tilde{F}_\alpha }{\partial \dot{\theta}^a } \tilde{W}^{a c} \right)\frac{\partial \tilde{\Phi}_c}{\partial \dot{x}^\beta}+
\left(-\frac{\partial \tilde{F}_\alpha}{\partial \dot{x}^\gamma}\tilde{W}^{\gamma \nu} - \frac{\partial \tilde{F}_\alpha }{\partial \dot{\theta}^a } \tilde{W}^{a \nu} \right)\frac{\partial \tilde{\Phi}_\nu}{\partial \dot{x}^\beta} \\
&& + (g_{\alpha \beta} + g_{\alpha d}\tau^d_\beta + g_{\beta d}\tau^d_\alpha +g_{de}\tau^d_\beta \tau^e_\alpha +\sigma_{ed}\tau^e_\alpha \tau^d_\beta  )_{,\gamma}\dot{x}^\gamma + \frac{\partial \tilde{F}_\alpha}{\partial x^\beta} -\frac{\partial \tilde{F}_\beta}{\partial x^\alpha} \\
& \stackrel{(\ref{FW1}),(\ref{FW2})}{=} & -\left( \tau^c_\alpha - (g_{\alpha d}\tau^d_\gamma+\sigma_{ad}\tau^a_\alpha \tau^d_\gamma)A^{\gamma\eta}g_{\eta e}g^{ec} \right)\frac{\partial \tilde{\Phi}_c}{\partial \dot{x}^\beta}
-\left( \delta_{\alpha}^{\nu}+(g_{\alpha d}\tau^d_\gamma+\sigma_{ad}\tau^a_\alpha\tau^d_\gamma)A^{\gamma\nu} \right)\frac{\partial \tilde{\Phi}_\nu}{\partial \dot{x}^\beta} \\
&& + (g_{\alpha \beta} + g_{\alpha d}\tau^d_\beta + g_{\beta d}\tau^d_\alpha +g_{de}\tau^d_\beta \tau^e_\alpha +\sigma_{ed}\tau^e_\alpha \tau^d_\beta  )_{,\gamma}\dot{x}^\gamma + \frac{\partial \tilde{F}_\alpha}{\partial x^\beta} -\frac{\partial \tilde{F}_\beta}{\partial x^\alpha} \\
&=& -\left( \tau^c_\alpha - (g_{\alpha d}\tau^d_\gamma+\sigma_{ad}\tau^a_\alpha \tau^d_\gamma)A^{\gamma\eta}g_{\eta e}g^{ec} \right) \left( g_{\beta c,\gamma}+g_{\gamma c,\beta} + (g_{cd}\tau^d_\beta)_{,\gamma} + (g_{cd}\tau^d_\gamma)_{,\beta} \right)\dot{x}^\gamma  \\
&&-\left( \tau^c_\alpha - (g_{\alpha d}\tau^d_\gamma+\sigma_{ad}\tau^a_\alpha \tau^d_\gamma)A^{\gamma\eta}g_{\eta e}g^{ec} \right)g_{cf,\beta} \dot{\theta}^f  \\
&& -\left( \delta_{\alpha}^{\nu}+(g_{\alpha d}\tau^d_\gamma+\sigma_{ad}\tau^a_\alpha\tau^d_\gamma)A^{\gamma\nu} \right)  (g_{\nu\beta,\gamma} + g_{\nu\gamma,\beta}-g_{\gamma\beta,\nu})\dot{x}^\gamma  \\
&& -\left( \delta_{\alpha}^{\nu}+(g_{\alpha d}\tau^d_\gamma+\sigma_{ad}\tau^a_\alpha\tau^d_\gamma)A^{\gamma\nu} \right) (g_{\nu f,\beta}-g_{\beta f,\nu})\dot{\theta}^f  \\
&& + (g_{\alpha \beta} + g_{\alpha d}\tau^d_\beta + g_{\beta d}\tau^d_\alpha +g_{de}\tau^d_\beta \tau^e_\alpha +\sigma_{ed}\tau^e_\alpha \tau^d_\beta  )_{,\gamma}\dot{x}^\gamma \\
&& +(g_{\alpha\gamma}+g_{\alpha d}\tau^d_\gamma + g_{\gamma d}\tau^d_\alpha+g_{ab}\tau^a_\alpha\tau^b_\gamma + \sigma_{ab}\tau^a_\alpha \tau^b_\gamma)_{,\beta}\dot{x}^\gamma + (g_{\alpha f} + g_{ef}\tau^e_\alpha )_{,\beta}\dot{\theta}^f \\
&& - (g_{\beta \gamma}+g_{\beta d}\tau^d_\gamma + g_{\gamma d}\tau^d_\beta+g_{ab}\tau^a_\beta \tau^b_\gamma + \sigma_{ab}\tau^a_\beta \tau^b_\gamma)_{,\alpha}\dot{x}^\gamma - (g_{\beta f} + g_{ef}\tau^e_\beta )_{,\alpha}\dot{\theta}^f \, .
\end{eqnarray*}
The $\dot{\theta}$ component becomes
\begin{eqnarray} \label{theta-comp}
\left( (g_{\alpha d}\tau^d_\gamma +\sigma_{ad}\tau^a_\alpha \tau^d_\gamma)A^{\gamma\eta}\left( g_{\eta e}g^{ec}g_{cf,\beta} - g_{\eta f,\beta} + g_{\beta f,\eta} \right) +g_{ef}\left( \tau^e_{\alpha,\beta} - \tau^e_{\beta, \alpha} \right)-g_{ef,\alpha}\tau^e_\beta \right)\dot{\theta}^f \, ,
\end{eqnarray}
which vanishes with the same assumptions as (\ref{comp}).

The $\dot{x}$ component becomes
\begin{eqnarray*}
&& -\left( \tau^c_\alpha - (g_{\alpha d}\tau^d_\gamma+\sigma_{ad}\tau^a_\alpha \tau^d_\gamma)A^{\gamma\eta}g_{\eta e}g^{ec} \right) \left( g_{\beta c,\gamma}+g_{\gamma c,\beta} + (g_{cd}\tau^d_\beta)_{,\gamma} + (g_{cd}\tau^d_\gamma)_{,\beta} \right)\dot{x}^\gamma  \\
&& -\left( \delta_{\alpha}^{\nu}+(g_{\alpha d}\tau^d_\gamma+\sigma_{ad}\tau^a_\alpha\tau^d_\gamma)A^{\gamma\nu} \right)  (g_{\nu\beta,\gamma} + g_{\nu\gamma,\beta}-g_{\gamma\beta,\nu})\dot{x}^\gamma  \\
&& + (g_{\alpha \beta} + g_{\alpha d}\tau^d_\beta + g_{\beta d}\tau^d_\alpha +g_{de}\tau^d_\beta \tau^e_\alpha +\sigma_{ed}\tau^e_\alpha \tau^d_\beta  )_{,\gamma}\dot{x}^\gamma \\
&& +(g_{\alpha\gamma}+g_{\alpha d}\tau^d_\gamma + g_{\gamma d}\tau^d_\alpha+g_{ab}\tau^a_\alpha\tau^b_\gamma + \sigma_{ab}\tau^a_\alpha \tau^b_\gamma)_{,\beta}\dot{x}^\gamma  \\
&& - (g_{\beta \gamma}+g_{\beta d}\tau^d_\gamma + g_{\gamma d}\tau^d_\beta+g_{ab}\tau^a_\beta \tau^b_\gamma + \sigma_{ab}\tau^a_\beta \tau^b_\gamma)_{,\alpha}\dot{x}^\gamma \\
&\stackrel{\mbox{M1}}{=}& (g_{\alpha a}\tau^a_\beta)_{,\gamma}+g_{\beta a}\tau^a_{\alpha,\gamma}+g_{ad}\tau^d_\beta \tau^a_{\alpha,\gamma}+(\sigma_{ad}\tau^a_\alpha \tau^d_\beta)_{,\gamma}+(g_{\alpha d}\tau^d_\gamma)_{,\beta} + g_{\gamma d}\tau^d_{\alpha,\beta}+g_{ab}\tau^b_\gamma \tau^a_{\alpha,\beta} \\
&&+(\sigma_{ad}\tau^a_\alpha \tau^d_\gamma)_{,\beta}-(g_{\beta d}\tau^d_\gamma)_{,\alpha}-(g_{\gamma d}\tau^d_\beta)_{,\alpha}-(g_{ab}\tau^a_\beta \tau^b_\gamma)_{,\alpha}-(\sigma_{ad}\tau^a_\beta \tau^d_\gamma)_{,\alpha} \\
&=& g_{ab}\tau^b_\gamma (\tau^a_{\alpha,\beta}-\tau^a_{\beta,\alpha}) +g_{\beta a} (\tau^a_{\alpha,\gamma}-\tau^a_{\gamma,\alpha}) +g_{ad}\tau^d_{\beta} (\tau^a_{\alpha,\gamma}-\tau^a_{\gamma,\alpha}) +g_{\gamma d} \tau^d_{\alpha,\beta} -g_{\beta d,\alpha}\tau^d_\gamma-g_{ab,\alpha}\tau^a_\beta \tau^b_\gamma
\end{eqnarray*}
where we have used again M1 and the symmetry of $\sigma_{ad}$. Now using M3 to cancel the first and last terms  we have
\begin{eqnarray*}
&& (g_{\beta a}+g_{ad}\tau^d_\beta)(\tau^a_{\alpha,\gamma}-\tau^a_{\gamma,\alpha})+g_{\gamma d}\tau^d_{\alpha,\beta}-g_{\beta d,\alpha}\tau^d_\gamma \\
&\stackrel{\mbox{M1,M3}}{=}& (g_{\beta a}+g_{ad}\tau^d_\beta)(g^{ea}g_{fe,\alpha}\tau^f_\gamma)+g_{\gamma d}\tau^d_{\alpha,\beta}+(\sigma_{de}\tau^e_\beta)_{,\alpha}\tau^d_\gamma \\
&\stackrel{\mbox{M1}}{=}& g_{\beta a}g^{ea}g_{fe,\alpha}\tau^f_\gamma + \tau^d_\beta g_{fd,\alpha}\tau^f_\gamma-\sigma_{de}\tau^e_\gamma\tau^d_{\alpha,\beta}+\sigma_{de,\alpha}\tau^e_\beta \tau^d_\gamma+\sigma_{de}\tau^e_{\beta,\alpha}\tau^d_\gamma \\
&\stackrel{\mbox{M1}}{=}& -\sigma_{ah}\tau^h_\beta g^{ea}g_{fe,\alpha}\tau^f_\gamma + \tau^d_\beta g_{fd,\alpha}\tau^f_\gamma+\sigma_{de,\alpha}\tau^e_\beta \tau^d_\gamma+\sigma_{de}\tau^d_\gamma(\tau^e_{\beta,\alpha}-\tau^e_{\alpha,\beta}) \\
&\stackrel{\mbox{M3}}{=}& -\sigma_{ah}\tau^h_\beta g^{ea}g_{fe,\alpha}\tau^f_\gamma - \sigma_{de}\tau^d_\gamma g^{fe}g_{hf,\alpha}\tau^h_\beta + (g_{de,\alpha}+\sigma_{de,\alpha})\tau^d_\gamma \tau^e_\beta \, .
\end{eqnarray*}
Summing up, for the $\dot{x}$ component, using M1 and M3 we get
\begin{equation} \label{genM2}
(g_{de,\alpha}+\sigma_{de,\alpha})\tau^d_\gamma \tau^e_\beta -\sigma_{ah}\tau^h_\beta g^{ea}g_{fe,\alpha}\tau^f_\gamma - \sigma_{de}\tau^d_\gamma g^{fe} g_{hf,\alpha} \tau^h_\beta \, .
\end{equation}
If we use M2, (\ref{genM2}) becomes
\begin{eqnarray*}
& & (g_{de,\alpha}+\sigma_{de,\alpha})\tau^d_\gamma \tau^e_\beta -\sigma_{ah}\tau^h_\beta \frac{1}{2}\sigma^{ea}(\sigma_{de,\alpha}+g_{de,\alpha})\tau^d_\gamma - \sigma_{de}\tau^d_\gamma \frac{1}{2}\sigma^{fe}(\sigma_{hf,\alpha} + g_{hf,\alpha} ) \tau^h_\beta \\
&=& (g_{de,\alpha}+\sigma_{de,\alpha})\tau^d_\gamma \tau^e_\beta -  \frac{1}{2}(\sigma_{dh,\alpha}+g_{dh,\alpha})\tau^d_\gamma \tau^h_\beta - \frac{1}{2}(\sigma_{hd,\alpha} + g_{hd,\alpha} ) \tau^h_\beta \tau^d_\gamma = 0 \, .
\end{eqnarray*}
That is using all of M1, M2 and M3 we get that the Helmholtz condition $(\ref{AB})_{\alpha \beta}$ vanishes.

\underline{Equation (\ref{AA}) vanishes identically for indices $ab$ and $\alpha b$}:
\begin{eqnarray*}
(\ref{AA})_{a b} & = & 
\frac{\partial^2 \tilde{F}_a}{\partial \theta^b \partial q^k}\dot{q}^k+\frac{\partial^2 \tilde{F}_a}{\partial \theta^b \partial \dot{q}^k}\ddot{q}^k-\frac{\partial \tilde{F}_a}{\partial \dot{q}^k}\frac{\partial \tilde{\Phi}_r}{\partial \theta^b}\tilde{W}^{k r} - \frac{\partial^2 \tilde{F}_b}{\partial \theta^a\partial q^k}\dot{q}^k - \frac{\partial^2 \tilde{F}_b}{\partial \theta^a \partial \dot{q}^k}\ddot{q}^k + \frac{\partial \tilde{F}_b}{\partial \dot{q}^k}\frac{\partial \tilde{\Phi}_r}{\partial \theta^a}\tilde{W}^{k r} =0 \, , \\
(\ref{AA})_{\alpha b} & = & \frac{\partial^2 \tilde{F}_\alpha}{\partial \theta^b \partial q^k}\dot{q}^k+\frac{\partial^2 \tilde{F}_\alpha}{\partial \theta^b \partial \dot{q}^k}\ddot{q}^k-\frac{\partial \tilde{F}_\alpha}{\partial \dot{q}^k}\frac{\partial \tilde{\Phi}_r}{\partial \theta^b}\tilde{W}^{k r} - \frac{\partial^2 \tilde{F}_b}{\partial x^\alpha \partial q^k}\dot{q}^k - \frac{\partial^2 \tilde{F}_b}{\partial x^\alpha \partial \dot{q}^k}\ddot{q}^k + \frac{\partial \tilde{F}_b}{\partial \dot{q}^k}\frac{\partial \tilde{\Phi}_r}{\partial x^\alpha}\tilde{W}^{k r} \\
& = & - \frac{\partial^2 \tilde{F}_b}{\partial x^\alpha \partial x^\gamma}\dot{x}^\gamma - \frac{\partial^2 \tilde{F}_b}{\partial x^\alpha \partial \dot{q}^k}\ddot{q}^k + \frac{\partial \tilde{F}_b}{\partial \dot{q}^k}\left( \frac{\partial \tilde{\Phi}_\nu}{\partial x^\alpha}\tilde{W}^{k \nu} + \frac{\partial \tilde{\Phi}_d}{\partial x^\alpha}\tilde{W}^{k d}  \right) \\
& = & - \frac{\partial^2 \tilde{F}_b}{\partial x^\alpha \partial x^\gamma}\dot{x}^\gamma - \frac{\partial^2 \tilde{F}_b}{\partial x^\alpha \partial \dot{q}^k}\ddot{q}^k + \left( \tilde{C}_{b \gamma} \tilde{W}^{\gamma \nu} + \tilde{C}_{b c}\tilde{W}^{c \nu}  \right)\frac{ \partial \tilde{\Phi}_\nu}{\partial x^\alpha} + \left( \tilde{C}_{b \gamma} \tilde{W}^{\gamma d} + \tilde{C}_{b c}\tilde{W}^{c d} \right)\frac{ \partial \tilde{\Phi}_d}{\partial x^\alpha}  \\
& = & - \frac{\partial^2 \tilde{F}_b}{\partial x^\alpha \partial x^\gamma}\dot{x}^\gamma - \frac{\partial^2 \tilde{F}_b}{\partial x^\alpha \partial \dot{x}^\gamma}\ddot{x}^\gamma - \frac{\partial^2 \tilde{F}_b}{\partial x^\alpha \partial \dot{\theta}^c}\ddot{\theta}^c  + \frac{\partial \tilde{\Phi}_b}{\partial x^\alpha}  \\
& = & -(g_{\nu b}+g_{bc}\tau^c_\nu)_{,\alpha\gamma} \dot{x}^\nu \dot{x}^\gamma -g_{bc,\alpha\gamma}\dot{x}^\gamma \dot{\theta}^c - (g_{\gamma b}+g_{bc}\tau^c_\gamma)\ddot{x}^\gamma -g_{bc,\alpha}\ddot{\theta}^c  + \frac{\partial \tilde{\Phi}_b}{\partial x^\alpha}  \\
&\stackrel{(\ref{phi-a-alpha})}{=} &  0 \, .
\end{eqnarray*}
\uline{For indices $\alpha\beta$ Equation (\ref{AA}) vanishes using M1, M2 and M3 or alternatively for systems with one degree of underactuation, since it  is symmetric in $\alpha$ and $\beta$}:
\begin{eqnarray*}
(\ref{AA})_{\alpha \beta} & = & \frac{\partial^2 \tilde{F}_\alpha}{\partial x^\beta \partial x^\gamma}\dot{x}^\gamma + \frac{\partial^2 \tilde{F}_\alpha}{\partial x^\beta \partial \dot{q}^k}\dot{v}^k - \frac{\partial^2 \tilde{F}_\beta}{\partial x^\alpha \partial x^\gamma}\dot{x}^\gamma - \frac{\partial^2 \tilde{F}_\beta}{\partial x^\alpha \partial \dot{q}^k}\dot{v}^k \\
&& -\frac{\partial \tilde{F}_\alpha}{\partial \dot{q}^k}\frac{\partial \tilde{\Phi}_r}{\partial x^\beta}\tilde{W}^{k r} + \frac{\partial \tilde{F}_\beta}{\partial \dot{q}^k}\frac{\partial \tilde{\Phi}_r}{\partial x^\alpha}\tilde{W}^{k r} \\
&\stackrel{\mbox{M1}}{=}& (g_{\alpha b}+g_{ab}\tau^a_\alpha)_{,\beta\gamma}\dot{\theta}^b\dot{x}^\gamma + (g_{\alpha\nu}+g_{\alpha a}\tau^a_\nu + g_{\nu a}\tau^a_\alpha + g_{ab}\tau^a_\alpha\tau^b_\nu +\sigma_{ab}\tau^a_\alpha\tau^b_\nu)_{,\beta \gamma}\dot{x}^\nu\dot{x}^\gamma \\
&& + (g_{\alpha b} +g_{ab}\tau^a_\alpha)_{,\beta}\ddot{\theta}^b + (g_{\alpha\gamma}+g_{\alpha a}\tau^a_\gamma +g_{\gamma a}\tau^a_\alpha + g_{ab}\tau^a_\alpha\tau^b_\gamma+\sigma_{ab}\tau^a_\alpha\tau^b_\gamma)_{,\beta}\ddot{x}^\gamma \\
&& - (g_{\beta b}+g_{ab}\tau^a_\beta)_{,\alpha\gamma}\dot{\theta}^b\dot{x}^\gamma - (g_{\beta\nu}+g_{\beta a}\tau^a_\nu + g_{\nu a}\tau^a_\beta + g_{ab}\tau^a_\beta\tau^b_\nu +\sigma_{ab}\tau^a_\beta\tau^b_\nu)_{,\alpha \gamma}\dot{x}^\nu\dot{x}^\gamma \\
&& - (g_{\beta b} +g_{ab}\tau^a_\beta)_{,\alpha}\ddot{\theta}^b - (g_{\beta\gamma}+g_{\beta a}\tau^a_\gamma +g_{\gamma a}\tau^a_\beta + g_{ab}\tau^a_\beta\tau^b_\gamma+\sigma_{ab}\tau^a_\beta\tau^b_\gamma)_{,\alpha}\ddot{x}^\gamma \\
&& -\frac{\partial \Phi_\alpha}{\partial x^\beta}-\tau^c_\alpha \frac{\partial \tilde{\Phi}_c}{\partial x^\beta} +\frac{\partial \Phi_\beta}{\partial x^\alpha} + \tau^c_\beta \frac{\partial \tilde{\Phi}_c}{\partial x^\alpha}\\
&\stackrel{(\ref{phi-a-alpha}),(\ref{phi-gamma-alpha})}{=}& (g_{\alpha b}+g_{ab}\tau^a_\alpha)_{,\beta\gamma}\dot{\theta}^b\dot{x}^\gamma + (g_{\alpha\nu}+g_{\alpha a}\tau^a_\nu + g_{\nu a}\tau^a_\alpha + g_{ab}\tau^a_\alpha\tau^b_\nu +\sigma_{ab}\tau^a_\alpha\tau^b_\nu)_{,\beta \gamma}\dot{x}^\nu\dot{x}^\gamma \\
&& + (g_{\alpha b} +g_{ab}\tau^a_\alpha)_{,\beta}\ddot{\theta}^b + (g_{\alpha\gamma}+g_{\alpha a}\tau^a_\gamma +g_{\gamma a}\tau^a_\alpha + g_{ab}\tau^a_\alpha\tau^b_\gamma+\sigma_{ab}\tau^a_\alpha\tau^b_\gamma)_{,\beta}\ddot{x}^\gamma \\
&& - (g_{\beta b}+g_{ab}\tau^a_\beta)_{,\alpha\gamma}\dot{\theta}^b\dot{x}^\gamma - (g_{\beta\nu}+g_{\beta a}\tau^a_\nu + g_{\nu a}\tau^a_\beta + g_{ab}\tau^a_\beta\tau^b_\nu +\sigma_{ab}\tau^a_\beta\tau^b_\nu)_{,\alpha \gamma}\dot{x}^\nu\dot{x}^\gamma \\
&& - (g_{\beta b} +g_{ab}\tau^a_\beta)_{,\alpha}\ddot{\theta}^b - (g_{\beta\gamma}+g_{\beta a}\tau^a_\gamma +g_{\gamma a}\tau^a_\beta + g_{ab}\tau^a_\beta\tau^b_\gamma+\sigma_{ab}\tau^a_\beta\tau^b_\gamma)_{,\alpha}\ddot{x}^\gamma \\
&& - (g_{\alpha\nu,\eta\beta}\dot{x}^\eta\dot{x}^\nu + g_{\alpha a,\nu\beta}\dot{x}^\nu\dot{\theta}^a + g_{\alpha\nu,\beta}\ddot{x}^\nu + g_{\alpha a,\beta}\ddot{\theta}^a ) \\
&& + (g_{\beta\nu,\eta\alpha}\dot{x}^\eta\dot{x}^\nu + g_{\beta a,\nu\alpha}\dot{x}^\nu\dot{\theta}^a + g_{\beta\nu,\alpha}\ddot{x}^\nu + g_{\beta a,\alpha}\ddot{\theta}^a ) \\
&& -\tau^c_\alpha (g_{\nu c,\gamma\beta} + (g_{cb}\tau^b_\nu)_{,\beta\gamma})\dot{x}^\nu \dot{x}^\gamma -\tau^c_\alpha g_{cb,\gamma\beta}\dot{x}^\gamma\dot{\theta}^b \\
&& -\tau^c_\alpha (g_{\gamma c,\beta} + g_{cb,\beta}\tau^b_\gamma + g_{cb}\tau^b_{\gamma,\beta} )\ddot{x}^\gamma -\tau^c_\alpha g_{cb,\beta}\ddot{\theta}^b \\
&& +\tau^c_\beta (g_{\nu c,\gamma\alpha} + (g_{cb}\tau^b_\nu)_{,\alpha\gamma})\dot{x}^\nu \dot{x}^\gamma +\tau^c_\beta g_{cb,\gamma\alpha}\dot{x}^\gamma\dot{\theta}^b \\
&& + \tau^c_\beta (g_{\gamma c,\alpha} + g_{cb,\alpha}\tau^b_\gamma + g_{cb}\tau^b_{\gamma,\alpha} )\ddot{x}^\gamma +\tau^c_\beta g_{cb,\alpha}\ddot{\theta}^b \, .
\end{eqnarray*}
The $\ddot{\theta}$ component becomes
$$
(g_{\alpha b} +g_{cb}\tau^c_\alpha)_{,\beta}\ddot{\theta}^b  - (g_{\beta b} +g_{cb}\tau^c_\beta)_{,\alpha}\ddot{\theta}^b - g_{\alpha b,\beta}\ddot{\theta}^b + g_{\beta b,\alpha}\ddot{\theta}^b -\tau^c_\alpha g_{cb,\beta}\ddot{\theta}^b +\tau^c_\beta g_{cb,\alpha}\ddot{\theta}^b = g_{cb}(\tau^c_{\alpha,\beta}-\tau^c_{\beta,\alpha})\ddot{\theta}^b \, .
$$
The $\ddot{x}$ component becomes
\begin{eqnarray*}
&& (g_{\alpha\gamma}+g_{\alpha a}\tau^a_\gamma +g_{\gamma a}\tau^a_\alpha + g_{ab}\tau^a_\alpha\tau^b_\gamma+\sigma_{ab}\tau^a_\alpha\tau^b_\gamma)_{,\beta}\ddot{x}^\gamma - (g_{\beta\gamma}+g_{\beta a}\tau^a_\gamma +g_{\gamma a}\tau^a_\beta + g_{ab}\tau^a_\beta\tau^b_\gamma+\sigma_{ab}\tau^a_\beta\tau^b_\gamma)_{,\alpha}\ddot{x}^\gamma  \\
&& - g_{\alpha\gamma,\beta}\ddot{x}^\gamma + g_{\beta\gamma,\alpha}\ddot{x}^\gamma -\tau^c_\alpha (g_{\gamma c,\beta} + g_{cb,\beta}\tau^b_\gamma + g_{cb}\tau^b_{\gamma,\beta} )\ddot{x}^\gamma + \tau^c_\beta (g_{\gamma c,\alpha} + g_{cb,\alpha}\tau^b_\gamma + g_{cb}\tau^b_{\gamma,\alpha} )\ddot{x}^\gamma \\
&=& g_{\alpha a,\beta}\tau^a_\gamma + g_{\alpha a}\tau^a_{\gamma,\beta} + g_{\gamma a}\tau^a_{\alpha,\beta} + g_{ab}\tau^b_\gamma \tau^a_{\alpha,\beta} + (\sigma_{ab}\tau^a_\alpha\tau^b_\gamma)_\beta \\
&& - g_{\beta a,\alpha}\tau^a_\gamma - g_{\beta a}\tau^a_{\gamma,\alpha} - g_{\gamma a}\tau^a_{\beta,\alpha} - g_{ab}\tau^b_\gamma \tau^a_{\beta,\alpha} - (\sigma_{ab}\tau^a_\beta \tau^b_\gamma)_\alpha \\
&=& (g_{\alpha a}\tau^a_\gamma)_{\beta} + g_{\gamma a}\tau^a_{\alpha,\beta} + g_{ab}\tau^b_\gamma \tau^a_{\alpha,\beta} + (\sigma_{ab}\tau^a_\alpha\tau^b_\gamma)_\beta - (g_{\beta a}\tau^a_\gamma)_{\alpha} - g_{\gamma a}\tau^a_{\beta,\alpha} - g_{ab}\tau^b_\gamma \tau^a_{\beta,\alpha} - (\sigma_{ab}\tau^a_\beta \tau^b_\gamma)_\alpha \\
& \stackrel{\mbox{M1}}{=} & (g_{\gamma c}+g_{cb}\tau^b_\gamma)(\tau^c_{\alpha,\beta}-\tau^c_{\beta,\alpha}) \ddot{x}^\gamma \, .
\end{eqnarray*}
The $\dot{\theta}\dot{x}$ component becomes
\begin{eqnarray*}
&& (g_{\alpha b}+g_{ab}\tau^a_\alpha)_{,\beta\gamma}\dot{\theta}^b\dot{x}^\gamma
- (g_{\beta b}+g_{ab}\tau^a_\beta)_{,\alpha\gamma}\dot{\theta}^b\dot{x}^\gamma
-g_{\alpha b,\gamma\beta}\dot{x}^\gamma\dot{\theta}^b \\
&& +g_{\beta b,\gamma\alpha}\dot{x}^\gamma\dot{\theta}^b
-\tau^c_\alpha g_{cb,\gamma\beta}\dot{x}^\gamma\dot{\theta}^b
+\tau^c_\beta g_{cb,\gamma\alpha}\dot{x}^\gamma\dot{\theta}^b \\
&=&  (g_{ab}\tau^a_\alpha)_{,\beta\gamma}\dot{\theta}^b\dot{x}^\gamma
- (g_{ab}\tau^a_\beta)_{,\alpha\gamma}\dot{\theta}^b\dot{x}^\gamma
-\tau^c_\alpha g_{cb,\gamma\beta}\dot{x}^\gamma\dot{\theta}^b
+\tau^c_\beta g_{cb,\gamma\alpha}\dot{x}^\gamma\dot{\theta}^b \\
&=& \left( g_{ca,\gamma}(\tau^c_{\alpha,\beta}-\tau^c_{\beta, \alpha}) + (g_{ca}\tau^c_{\alpha,\gamma})_{,\beta} - (g_{ca}\tau^c_{\beta,\gamma})_{,\alpha} \right) \dot{\theta}^a\dot{x}^\gamma \\
&\stackrel{\mbox{M3}}{=}& g_{ca,\gamma}(\tau^c_{\alpha,\beta}-\tau^c_{\beta, \alpha}) \dot{\theta}^a\dot{x}^\gamma \, ,
\end{eqnarray*}
where in the last equality we have used M3 in the following way:
\begin{eqnarray*}
- (g_{ca}\tau^c_{\beta,\gamma})_{,\alpha} + (g_{ca}\tau^c_{\alpha,\gamma})_{,\beta} &\stackrel{\mbox{M3}}{=}& -(g_{ca}\tau^c_{\gamma,\beta}+g_{ca}g^{cd}g_{ed,\beta}\tau^e_\gamma)_{,\alpha} + (g_{ca}\tau^c_{\gamma,\alpha}+g_{ca}g^{cd}g_{ed,\alpha}\tau^e_\gamma)_{,\beta} \\
&=& -(g_{ca,\alpha}\tau^c_{\gamma,\beta}+g_{ca}\tau^c_{\gamma,\beta\alpha} + g_{ea,\beta\alpha}\tau^e_\gamma + g_{ea,\beta}\tau^e_{\gamma,\alpha})  \\
&& + g_{ca,\beta}\tau^c_{\gamma,\alpha}+g_{ca}\tau^c_{\gamma,\alpha\beta}+g_{ea,\alpha\beta}\tau^e_\gamma+g_{ea,\alpha}\tau^e_{\gamma,\beta} = 0 \, .
\end{eqnarray*}
Finally the $\dot{x}\dot{x}$ component becomes
\begin{eqnarray*}
&& (g_{\alpha\nu}+g_{\alpha a}\tau^a_\nu + g_{\nu a}\tau^a_\alpha + g_{ab}\tau^a_\alpha\tau^b_\nu +\sigma_{ab}\tau^a_\alpha\tau^b_\nu)_{,\beta \gamma}\dot{x}^\nu\dot{x}^\gamma \\
&& - (g_{\beta\nu}+g_{\beta a}\tau^a_\nu + g_{\nu a}\tau^a_\beta + g_{ab}\tau^a_\beta\tau^b_\nu +\sigma_{ab}\tau^a_\beta\tau^b_\nu)_{,\alpha \gamma}\dot{x}^\nu\dot{x}^\gamma \\
&& -g_{\alpha\nu,\gamma\beta}\dot{x}^\gamma\dot{x}^\nu
+g_{\beta\nu,\gamma\alpha}\dot{x}^\gamma\dot{x}^\nu
-\tau^c_\alpha (g_{\nu c,\gamma\beta} + (g_{cb}\tau^b_\nu)_{,\beta\gamma})\dot{x}^\nu \dot{x}^\gamma
+\tau^c_\beta (g_{\nu c,\gamma\alpha} + (g_{cb}\tau^b_\nu)_{,\alpha\gamma})\dot{x}^\nu \dot{x}^\gamma \\
&=& (g_{ab}\tau^b_\nu)_{,\beta}\tau^a_{\alpha,\gamma} + (g_{ab}\tau^b_\nu)_{,\gamma}\tau^a_{\alpha,\beta} +
 g_{ab}\tau^b_\nu \tau^a_{\alpha,\beta\gamma}  + (g_{\alpha a}\tau^a_\nu)_{,\beta\gamma} + g_{\nu a,\beta}\tau^a_{\alpha,\gamma} + g_{\nu a,\gamma}\tau^a_{\alpha,\beta} \\
&& +g_{\nu a}\tau^a_{\alpha,\beta\gamma}+
(\sigma_{ab}\tau^a_\alpha \tau^b_\nu)_{,\beta\gamma}  - (g_{ab}\tau^b_\nu)_{,\alpha}\tau^a_{\beta,\gamma} - (g_{ab}\tau^b_\nu)_{,\gamma}\tau^a_{\beta,\alpha}
 - g_{ab}\tau^b_\nu \tau^a_{\beta,\alpha\gamma} - (g_{\beta a}\tau^a_\nu)_{,\alpha\gamma} \\
&& - g_{\nu a,\alpha}\tau^a_{\beta,\gamma} - g_{\nu a,\gamma}\tau^a_{\beta,\alpha} - g_{\nu a}\tau^a_{\beta,\alpha\gamma} - (\sigma_{ab}\tau^a_\beta \tau^b_\nu)_{,\alpha\gamma} \\
&=& g_{ab}\tau^b_\nu \tau^a_{\alpha,\beta\gamma} - g_{ab}\tau^b_\nu \tau^a_{\beta,\alpha\gamma} + g_{\nu a,\beta}\tau^a_{\alpha,\gamma} + g_{\nu a,\gamma}\tau^a_{\alpha,\beta} + g_{\nu a}\tau^a_{\alpha,\beta\gamma}  - g_{\nu a,\alpha}\tau^a_{\beta,\gamma} - g_{\nu a,\gamma}\tau^a_{\beta,\alpha} - g_{\nu a}\tau^a_{\beta,\alpha\gamma}  \\
&& +(g_{ab}\tau^b_\nu)_{,\beta}\tau^a_{\alpha,\gamma} + (g_{ab}\tau^b_\nu)_{,\gamma}\tau^a_{\alpha,\beta}
- (g_{ab}\tau^b_\nu)_{,\alpha}\tau^a_{\beta,\gamma} - (g_{ab}\tau^b_\nu)_{,\gamma}\tau^a_{\beta,\alpha} \\
&=& ( g_{\nu a,\gamma}+ (g_{ab}\tau^b_\nu)_{,\gamma})(\tau^a_{\alpha,\beta}-\tau^a_{\beta,\alpha}) \\
&& + g_{ab}\tau^b_\nu \tau^a_{\alpha,\beta\gamma} - g_{ab}\tau^b_\nu \tau^a_{\beta,\alpha\gamma} + g_{\nu a,\beta}\tau^a_{\alpha,\gamma}  + g_{\nu a}\tau^a_{\alpha,\beta\gamma}  - g_{\nu a,\alpha}\tau^a_{\beta,\gamma}  - g_{\nu a}\tau^a_{\beta,\alpha\gamma} \\
&&  +(g_{ab}\tau^b_\nu)_{,\beta}\tau^a_{\alpha,\gamma} - (g_{ab}\tau^b_\nu)_{,\alpha}\tau^a_{\beta,\gamma} \, ,
\end{eqnarray*}
and adding all of the components we get
\begin{eqnarray*}
(\ref{AA})_{\alpha \beta}&=& g_{cb}(\tau^c_{\alpha,\beta}-\tau^c_{\beta,\alpha})\ddot{\theta}^b +
(g_{\gamma c}+g_{cb}\tau^b_\gamma)(\tau^c_{\alpha,\beta}-\tau^c_{\beta,\alpha}) \ddot{x}^\gamma +
g_{ca,\gamma}(\tau^c_{\alpha,\beta}-\tau^c_{\beta, \alpha})\dot{\theta}^a\dot{x}^\gamma \\
&& +( g_{\nu a,\gamma}+ (g_{ab}\tau^b_\nu)_{,\gamma})(\tau^a_{\alpha,\beta}-\tau^a_{\beta,\alpha})\dot{x}^\nu\dot{x}^\gamma \\
&& + (g_{ab}\tau^b_\nu \tau^a_{\alpha,\beta\gamma} - g_{ab}\tau^b_\nu \tau^a_{\beta,\alpha\gamma} + g_{\nu a,\beta}\tau^a_{\alpha,\gamma} + g_{\nu a}\tau^a_{\alpha,\beta\gamma}  - g_{\nu a,\alpha}\tau^a_{\beta,\gamma} - g_{\nu a}\tau^a_{\beta,\alpha\gamma} \\
&& +(g_{ab}\tau^b_\nu)_{,\beta}\tau^a_{\alpha,\gamma} - (g_{ab}\tau^b_\nu)_{,\alpha}\tau^a_{\beta,\gamma} )\dot{x}^\nu\dot{x}^\gamma \\
&=& \tilde{\Phi}_c(\tau^c_{\alpha,\beta}-\tau^c_{\beta,\alpha}) + (g_{ab}\tau^b_\nu \tau^a_{\alpha,\beta\gamma} - g_{ab}\tau^b_\nu \tau^a_{\beta,\alpha\gamma} + g_{\nu a,\beta}\tau^a_{\alpha,\gamma} + g_{\nu a}\tau^a_{\alpha,\beta\gamma}  - g_{\nu a,\alpha}\tau^a_{\beta,\gamma} - g_{\nu a}\tau^a_{\beta,\alpha\gamma} \\
&& +(g_{ab}\tau^b_\nu)_{,\beta}\tau^a_{\alpha,\gamma} - (g_{ab}\tau^b_\nu)_{,\alpha}\tau^a_{\beta,\gamma} )\dot{x}^\nu\dot{x}^\gamma =: \tilde{\Phi}_c(\tau^c_{\alpha,\beta}-\tau^c_{\beta,\alpha}) + R\dot{x}^\nu\dot{x}^\gamma   \, .
\end{eqnarray*}
Next we will show that the term $R\dot{x}^\nu\dot{x}^\gamma$ vanishes using  M1, M2 and M3. First we compute some expressions that we will need. We will consistently omit writing the term $\dot{x}^\nu\dot{x}^\gamma$ but will take it into account and cancel any symmetric terms in $\nu$ and $\gamma$. From M2 we get $\sigma_{ab,\beta}=-g_{ab,\beta}+2\sigma_{be}g^{ed}g_{ad,\beta}$
and therefore 
\begin{equation} \label{aux1}
(g_{ab}-\sigma_{ab})_{,\beta}=2g_{ab,\beta}-2\sigma_{be}g^{ed}g_{ad,\beta}\, .
\end{equation} 
Using $g^{da}_{,\alpha}=-g^{ea}g^{dh}g_{eh,\alpha}$ we get
\begin{eqnarray*}
(g^{da}g_{ed,\alpha}\tau^e_{\gamma})_{,\beta} - (g^{da}g_{ed,\beta}\tau^e_{\gamma})_{,\alpha} &=& g^{da}(g_{ed,\alpha\beta}\tau^e_\gamma + g_{ed,\alpha}\tau^e_{\gamma,\beta} - g_{ed,\beta\alpha}\tau^e_\gamma - g_{ed,\beta}\tau^e_{\gamma,\alpha} ) \\
&& +g^{da}_{,\beta}g_{ed,\alpha}\tau^e_\gamma -g^{da}_{,\alpha}g_{ed,\beta}\tau^e_\gamma \\
&=& g^{da}(g_{ed,\alpha}\tau^e_{\gamma,\beta} - g_{ed,\beta}\tau^e_{\gamma,\alpha} ) - g^{ea}g^{dh}g_{eh,\beta}g_{kd,\alpha}\tau^k_\gamma \\
&& + g^{ea}g^{dh}g_{eh,\alpha}g_{kd,\beta}\tau^k_\gamma \, ,
\end{eqnarray*}
and therefore we have 
\begin{eqnarray} \label{aux2}
&&(g_{ab}-\sigma_{ab})\tau^b_\nu \left( (g^{da}g_{ed,\alpha}\tau^e_{\gamma})_{,\beta} - (g^{da}g_{ed,\beta}\tau^e_{\gamma})_{,\alpha}  \right) \nonumber \\
&=& (g_{ab}-\sigma_{ab})\tau^b_\nu \left(g^{da}(g_{ed,\alpha}\tau^e_{\gamma,\beta} - g_{ed,\beta}\tau^e_{\gamma,\alpha} ) - g^{ea}g^{dh}g_{eh,\beta}g_{kd,\alpha}\tau^k_\gamma + g^{ea}g^{dh}g_{eh,\alpha}g_{kd,\beta}\tau^k_\gamma\right) \nonumber \\
&=& g_{ab}\tau^b_\nu g^{da}(g_{ed,\alpha}\tau^e_{\gamma,\beta} - g_{ed,\beta}\tau^e_{\gamma,\alpha} ) -g_{ab}\tau^b_\nu g^{ea}g^{dh}g_{eh,\beta}g_{kd,\alpha}\tau^k_\gamma + g_{ab}\tau^b_\nu g^{ea}g^{dh}g_{eh,\alpha}g_{kd,\beta}\tau^k_\gamma \nonumber \\
&& -\sigma_{ab}\tau^b_\nu (g^{da}(g_{ed,\alpha}\tau^e_{\gamma,\beta} - g_{ed,\beta}\tau^e_{\gamma,\alpha} ) - g^{ea}g^{dh}g_{eh,\beta}g_{kd,\alpha}\tau^k_\gamma + g^{ea}g^{dh}g_{eh,\alpha}g_{kd,\beta}\tau^k_\gamma) \nonumber \\
&=& \tau^d_\nu (g_{ed,\alpha}\tau^e_{\gamma,\beta} - g_{ed,\beta}\tau^e_{\gamma,\alpha} ) - \tau^e_\nu g^{dh}g_{eh,\beta}g_{kd,\alpha}\tau^k_\gamma + \tau^e_\nu g^{dh}g_{eh,\alpha}g_{kd,\beta}\tau^k_\gamma \nonumber \\
&& -\sigma_{ab}\tau^b_\nu (g^{da}(g_{ed,\alpha}\tau^e_{\gamma,\beta} - g_{ed,\beta}\tau^e_{\gamma,\alpha} ) - g^{ea}g^{dh}g_{eh,\beta}g_{kd,\alpha}\tau^k_\gamma + g^{ea}g^{dh}g_{eh,\alpha}g_{kd,\beta}\tau^k_\gamma) \nonumber \\
&=& \tau^d_\nu (g_{ed,\alpha}\tau^e_{\gamma,\beta} - g_{ed,\beta}\tau^e_{\gamma,\alpha} ) -\sigma_{ab}\tau^b_\nu g^{da}g_{ed,\alpha}\tau^e_{\gamma,\beta} +\sigma_{ab}\tau^b_\nu g^{da} g_{ed,\beta}\tau^e_{\gamma,\alpha}  \nonumber \\
&& + \sigma_{ab}\tau^b_\nu g^{ea}g^{dh}g_{eh,\beta}g_{kd,\alpha}\tau^k_\gamma -\sigma_{ab}\tau^b_\nu g^{ea}g^{dh}g_{eh,\alpha}g_{kd,\beta}\tau^k_\gamma \nonumber \\
&\stackrel{\mbox{M2}}{=}& \tau^d_\nu (g_{ed,\alpha}\tau^e_{\gamma,\beta} - g_{ed,\beta}\tau^e_{\gamma,\alpha} ) -\frac{1}{2}\sigma_{ab}\tau^b_\nu \sigma^{da}(\sigma_{ed,\alpha}+g_{ed,\alpha})\tau^e_{\gamma,\beta} +\frac{1}{2}\sigma_{ab}\tau^b_\nu \sigma^{da}(\sigma_{ed,\beta}+g_{ed,\beta})\tau^e_{\gamma,\alpha}  \nonumber \\
&& + \frac{1}{4}\sigma_{ab}\tau^b_\nu \sigma^{ea}(\sigma_{eh,\beta}+g_{eh,\beta})\sigma^{dh}(\sigma_{kd,\alpha}+g_{kd,\alpha})\tau^k_\gamma \nonumber \\
&& -\frac{1}{4}\sigma_{ab}\tau^b_\nu \sigma^{ea}(\sigma_{eh,\alpha}+g_{eh,\alpha})\sigma^{dh}(\sigma_{kd,\beta}+g_{kd,\beta}) \tau^k_\gamma \nonumber \\
&=& \tau^d_\nu (g_{ed,\alpha}\tau^e_{\gamma,\beta} - g_{ed,\beta}\tau^e_{\gamma,\alpha} ) -\frac{1}{2}\tau^d_\nu (\sigma_{ed,\alpha}+g_{ed,\alpha})\tau^e_{\gamma,\beta} +\frac{1}{2}\tau^d_\nu (\sigma_{ed,\beta}+g_{ed,\beta})\tau^e_{\gamma,\alpha} \nonumber \\
&& + \frac{1}{4} \tau^e_\nu (\sigma_{eh,\beta}+g_{eh,\beta})\sigma^{dh}(\sigma_{kd,\alpha}+g_{kd,\alpha})\tau^k_\gamma -\frac{1}{4}\tau^e_\nu (\sigma_{eh,\alpha}+g_{eh,\alpha})\sigma^{dh}(\sigma_{kd,\beta}+g_{kd,\beta}) \tau^k_\gamma \nonumber \\
&=& \tau^d_\nu (g_{ed,\alpha}\tau^e_{\gamma,\beta} - g_{ed,\beta}\tau^e_{\gamma,\alpha} ) -\frac{1}{2}\tau^d_\nu (\sigma_{ed,\alpha}+g_{ed,\alpha})\tau^e_{\gamma,\beta} +\frac{1}{2}\tau^d_\nu (\sigma_{ed,\beta}+g_{ed,\beta})\tau^e_{\gamma,\alpha} \, .
\end{eqnarray}
We will also use
\begin{eqnarray} \label{aux3}
&& -2\sigma_{be}g^{ed}g_{ad,\beta}\tau^b_\nu g^{ha}g_{kh,\alpha}\tau^k_{\gamma} + 2\sigma_{be}g^{ed}g_{ad,\alpha}\tau^b_\nu g^{ha}g_{kh,\beta}\tau^k_{\gamma} \nonumber \\
&\stackrel{\mbox{M2}}{=}& -\frac{1}{2}\sigma_{be}\sigma^{ed}(\sigma_{ad,\beta}+g_{ad,\beta})\tau^b_\nu \sigma^{ha}(\sigma_{kh,\alpha}+g_{kh,\alpha})\tau^k_{\gamma}  \nonumber \\
&& + \frac{1}{2}\sigma_{be}\sigma^{ed}(\sigma_{ad,\alpha}+g_{ad,\alpha})\tau^b_\nu \sigma^{ha}(\sigma_{kh,\beta}+g_{kh,\beta})\tau^k_{\gamma} \nonumber \\
&=& -\frac{1}{2}(\sigma_{ab,\beta}+g_{ab,\beta})\tau^b_\nu \sigma^{ha}(\sigma_{kh,\alpha}+g_{kh,\alpha})\tau^k_{\gamma} + \frac{1}{2}(\sigma_{ab,\alpha}+g_{ab,\alpha})\tau^b_\nu \sigma^{ha}(\sigma_{kh,\beta}+g_{kh,\beta})\tau^k_{\gamma} =0 \, .
\end{eqnarray}
Now we will finally check that $R$ vanishes using M1, M2 and M3. Recall that in the computation below we omit the term $\dot{x}^\nu\dot{x}^\gamma$:
\begin{eqnarray*}
R&=& ((g_{ab}\tau^b_\nu+g_{\nu a})\tau^a_{\alpha,\gamma})_{,\beta} - ((g_{ab}\tau^b_\nu+g_{\nu a})\tau^a_{\beta,\gamma})_{,\alpha} 
\stackrel{\mbox{M1}}{=} ((g_{ab}-\sigma_{ab})\tau^b_\nu\tau^a_{\alpha,\gamma})_{,\beta} - ((g_{ab}-\sigma_{ab})\tau^b_\nu\tau^a_{\beta,\gamma})_{,\alpha}\\
&\stackrel{\mbox{M3}}{=}& ((g_{ab}-\sigma_{ab})\tau^b_\nu(\tau^a_{\gamma,\alpha}+g^{da}g_{ed,\alpha}\tau^e_{\gamma}))_{,\beta} - ((g_{ab}-\sigma_{ab})\tau^b_\nu(\tau^a_{\gamma,\beta}+g^{da}g_{ed,\beta}\tau^e_{\gamma}))_{,\alpha}\\
&=& (g_{ab}-\sigma_{ab})_{,\beta}\tau^b_\nu(\tau^a_{\gamma,\alpha}+g^{da}g_{ed,\alpha}\tau^e_{\gamma})
+(g_{ab}-\sigma_{ab})\tau^b_{\nu,\beta}(\tau^a_{\gamma,\alpha}+g^{da}g_{ed,\alpha}\tau^e_{\gamma}) \\
&&+(g_{ab}-\sigma_{ab})\tau^b_\nu(\tau^a_{\gamma,\alpha}+g^{da}g_{ed,\alpha}\tau^e_{\gamma})_{,\beta} 
 - (g_{ab}-\sigma_{ab})_{,\alpha}\tau^b_\nu(\tau^a_{\gamma,\beta}+g^{da}g_{ed,\beta}\tau^e_{\gamma}) \\
&& - (g_{ab}-\sigma_{ab})\tau^b_{\nu,\alpha}(\tau^a_{\gamma,\beta}+g^{da}g_{ed,\beta}\tau^e_{\gamma})
- (g_{ab}-\sigma_{ab})\tau^b_\nu(\tau^a_{\gamma,\beta}+g^{da}g_{ed,\beta}\tau^e_{\gamma})_{,\alpha} \\
&=& (g_{ab}-\sigma_{ab})_{,\beta}\tau^b_\nu(\tau^a_{\gamma,\alpha}+g^{da}g_{ed,\alpha}\tau^e_{\gamma})
+(g_{ab}-\sigma_{ab})\tau^b_{\nu,\beta}(g^{da}g_{ed,\alpha}\tau^e_{\gamma})
+(g_{ab}-\sigma_{ab})\tau^b_\nu(g^{da}g_{ed,\alpha}\tau^e_{\gamma})_{,\beta} \\
&& - (g_{ab}-\sigma_{ab})_{,\alpha}\tau^b_\nu(\tau^a_{\gamma,\beta}+g^{da}g_{ed,\beta}\tau^e_{\gamma})
- (g_{ab}-\sigma_{ab})\tau^b_{\nu,\alpha}(g^{da}g_{ed,\beta}\tau^e_{\gamma})
- (g_{ab}-\sigma_{ab})\tau^b_\nu(g^{da}g_{ed,\beta}\tau^e_{\gamma})_{,\alpha} \\
&\stackrel{(\ref{aux1}),(\ref{aux2})}{=}& 2g_{ab,\beta}\tau^b_\nu\tau^a_{\gamma,\alpha}-2\sigma_{be}g^{ed}g_{ad,\beta}\tau^b_\nu\tau^a_{\gamma,\alpha}
+ 2g_{ab,\beta}\tau^b_\nu g^{da}g_{ed,\alpha}\tau^e_{\gamma}-2\sigma_{be}g^{ed}g_{ad,\beta}\tau^b_\nu g^{da}g_{ed,\alpha}\tau^e_{\gamma} \\
&& -2g_{ab,\alpha}\tau^b_\nu\tau^a_{\gamma,\beta}+2\sigma_{be}g^{ed}g_{ad,\alpha}\tau^b_\nu\tau^a_{\gamma,\beta}
- 2g_{ab,\alpha}\tau^b_\nu g^{da}g_{ed,\beta}\tau^e_{\gamma}+2\sigma_{be}g^{ed}g_{ad,\alpha}\tau^b_\nu g^{da}g_{ed,\beta}\tau^e_{\gamma} \\
&& +\tau^d_{\nu,\beta}g_{ed,\alpha}\tau^e_{\gamma}-\sigma_{ab}\tau^b_{\nu,\beta}g^{da}g_{ed,\alpha}\tau^e_{\gamma} - \tau^d_{\nu,\alpha}g_{ed,\beta}\tau^e_{\gamma} + \sigma_{ab}\tau^b_{\nu,\alpha}g^{da}g_{ed,\beta}\tau^e_{\gamma} \\
&& +\tau^d_\nu (g_{ed,\alpha}\tau^e_{\gamma,\beta} - g_{ed,\beta}\tau^e_{\gamma,\alpha} ) -\frac{1}{2}\tau^d_\nu (\sigma_{ed,\alpha}+g_{ed,\alpha})\tau^e_{\gamma,\beta} +\frac{1}{2}\tau^d_\nu (\sigma_{ed,\beta}+g_{ed,\beta})\tau^e_{\gamma,\alpha} \\
&=& -2\sigma_{be}g^{ed}g_{ad,\beta}\tau^b_\nu\tau^a_{\gamma,\alpha}
+ 2g_{ab,\beta}\tau^b_\nu g^{da}g_{ed,\alpha}\tau^e_{\gamma}-2\sigma_{be}g^{ed}g_{ad,\beta}\tau^b_\nu g^{da}g_{ed,\alpha}\tau^e_{\gamma} \\
&& +2\sigma_{be}g^{ed}g_{ad,\alpha}\tau^b_\nu\tau^a_{\gamma,\beta}
- 2g_{ab,\alpha}\tau^b_\nu g^{da}g_{ed,\beta}\tau^e_{\gamma}+2\sigma_{be}g^{ed}g_{ad,\alpha}\tau^b_\nu g^{da}g_{ed,\beta}\tau^e_{\gamma} \\
&& -\sigma_{ab}\tau^b_{\nu,\beta}g^{da}g_{ed,\alpha}\tau^e_{\gamma} +\sigma_{ab}\tau^b_{\nu,\alpha}g^{da}g_{ed,\beta}\tau^e_{\gamma}  -\frac{1}{2}\tau^d_\nu (\sigma_{ed,\alpha}+g_{ed,\alpha})\tau^e_{\gamma,\beta} +\frac{1}{2}\tau^d_\nu (\sigma_{ed,\beta}+g_{ed,\beta})\tau^e_{\gamma,\alpha} \\
&=& -2\sigma_{be}g^{ed}g_{ad,\beta}\tau^b_\nu\tau^a_{\gamma,\alpha}
-2\sigma_{be}g^{ed}g_{ad,\beta}\tau^b_\nu g^{da}g_{ed,\alpha}\tau^e_{\gamma}  + 2\sigma_{be}g^{ed}g_{ad,\alpha}\tau^b_\nu\tau^a_{\gamma,\beta} + 2\sigma_{be}g^{ed}g_{ad,\alpha}\tau^b_\nu g^{da}g_{ed,\beta}\tau^e_{\gamma} \\
&& -\sigma_{ab}\tau^b_{\nu,\beta}g^{da}g_{ed,\alpha}\tau^e_{\gamma} + \sigma_{ab}\tau^b_{\nu,\alpha}g^{da}g_{ed,\beta}\tau^e_{\gamma}  -\frac{1}{2}\tau^d_\nu (\sigma_{ed,\alpha}+g_{ed,\alpha})\tau^e_{\gamma,\beta} +\frac{1}{2}\tau^d_\nu (\sigma_{ed,\beta}+g_{ed,\beta})\tau^e_{\gamma,\alpha} \\
&\stackrel{(\ref{aux3})}{=}& -2\sigma_{be}g^{ed}g_{ad,\beta}\tau^b_\nu\tau^a_{\gamma,\alpha} +2\sigma_{be}g^{ed}g_{ad,\alpha}\tau^b_\nu\tau^a_{\gamma,\beta}  -\sigma_{ab}\tau^b_{\nu,\beta}(g^{da}g_{ed,\alpha}\tau^e_{\gamma}) + \sigma_{ab}\tau^b_{\nu,\alpha}(g^{da}g_{ed,\beta}\tau^e_{\gamma}) \\
&&  -\frac{1}{2}\tau^d_\nu (\sigma_{ed,\alpha}+g_{ed,\alpha})\tau^e_{\gamma,\beta} +\frac{1}{2}\tau^d_\nu (\sigma_{ed,\beta}+g_{ed,\beta})\tau^e_{\gamma,\alpha} \\
&\stackrel{\mbox{M2}}{=}& -\sigma_{be}\sigma^{ed}(\sigma_{ad,\beta}+g_{ad,\beta})\tau^b_\nu\tau^a_{\gamma,\alpha} + \sigma_{be}\sigma^{ed}(\sigma_{ad,\alpha}+g_{ad,\alpha})\tau^b_\nu\tau^a_{\gamma,\beta} \\
&& -\frac{1}{2}\sigma_{ab}\tau^b_{\nu,\beta}\sigma^{da}(\sigma_{ed,\alpha}+g_{ed,\alpha})\tau^e_{\gamma} + \frac{1}{2}\sigma_{ab}\tau^b_{\nu,\alpha}\sigma^{da}(\sigma_{ed,\beta}+g_{ed,\beta})\tau^e_{\gamma} \\
&&  -\frac{1}{2}\tau^d_\nu (\sigma_{ed,\alpha}+g_{ed,\alpha})\tau^e_{\gamma,\beta} +\frac{1}{2}\tau^d_\nu (\sigma_{ed,\beta}+g_{ed,\beta})\tau^e_{\gamma,\alpha} \\
&=& -(\sigma_{ab,\beta}+g_{ab,\beta})\tau^b_\nu\tau^a_{\gamma,\alpha} + (\sigma_{ab,\alpha}+g_{ab,\alpha})\tau^b_\nu\tau^a_{\gamma,\beta}  -\frac{1}{2}\tau^b_{\nu,\beta}(\sigma_{eb,\alpha}+g_{eb,\alpha})\tau^e_{\gamma} + \frac{1}{2}\tau^b_{\nu,\alpha}(\sigma_{eb,\beta}+g_{eb,\beta})\tau^e_{\gamma} \\
&&  -\frac{1}{2}\tau^d_\nu (\sigma_{ed,\alpha}+g_{ed,\alpha})\tau^e_{\gamma,\beta} +\frac{1}{2}\tau^d_\nu (\sigma_{ed,\beta}+g_{ed,\beta})\tau^e_{\gamma,\alpha} =0 \, .\\
\end{eqnarray*}

\section*{Acknowledgments}

This work has been partially supported by research grants MTM2016-76702-P (MINECO) and the ICMAT Severo Ochoa project SEV-2015-0554 (MINECO). 
MFP has been financially supported by a FPU scholarship from MECD. AMB
has been supported in part by the National Science Foundation grant 
DMS-1613819 and AFOSR.

\bibliographystyle{plain}
\bibliography{References}

\end{document}